\definecolor{webgreen}{rgb}{0,.5,0}
\definecolor{webbrown}{rgb}{.6,0,0}
\DeclareMathOperator{\re}{Re}
\DeclareMathOperator{\im}{Im}
\begin{document}

\theoremstyle{plain}
\newtheorem{theorem}{Theorem}
\newtheorem{corollary}[theorem]{Corollary}
\newtheorem{lemma}[theorem]{Lemma}
\newtheorem{proposition}[theorem]{Proposition}

\theoremstyle{definition}
\newtheorem{definition}[theorem]{Definition}
\newtheorem{example}[theorem]{Example}
\newtheorem{conjecture}[theorem]{Conjecture}

\theoremstyle{remark}
\newtheorem{remark}[theorem]{Remark}

\begin{center}
\vskip 1cm{\LARGE\bf The Generalization of Faulhaber's Formula to
\vskip0.1 in Sums of Arbitrary Complex Powers}
\vskip 1cm
\large
Raphael Schumacher\\
Department of Mathematics\\
ETH Zurich\\
R\"amistrasse 101\\
8092 Zurich\\
Switzerland\\
\href{raphael.schumacher@math.ethz.ch}{\tt raphael.schumacher@math.ethz.ch}\\
\end{center}

\vskip .2 in

\begin{abstract}
In this paper we present a generalization of Faulhaber's formula to sums of arbitrary complex powers $m\in\mathbb{C}$. These summation formulas for sums of the form $\sum_{k=1}^{\lfloor x\rfloor}k^{m}$ and $\sum_{k=1}^{n}k^{m}$, where $x\in\mathbb{R}^{+}$ and $n\in\mathbb{N}$, are based on a series acceleration involving Stirling numbers of the first kind. While it is well-known that the corresponding expressions obtained from the Euler-Maclaurin summation formula diverge, our summation formulas are all very rapidly convergent.
\end{abstract}

\section{Introduction}
\label{sec:Introduction}

For two natural numbers $m,n\in\mathbb{N}_{0}$, the Faulhaber formula \cite{1}, given by
\begin{equation}
\sum_{k=0}^{n}k^{m}=\frac{1}{m+1}\sum_{k=0}^{m}(-1)^{k}{m+1\choose k}B_{k}n^{m-k+1},
\end{equation}
where the $B_{k}$'s are the Bernoulli numbers, provides a very efficient way to compute the sum of the $m$-th powers of the first $n$ natural numbers.
This formula was found by Jacob Bernoulli around 1700 and was first proved by Carl Gustav Jacobi in 1834.\\

We will prove a rapidly convergent exact generalization of Faulhaber's formula to finite sums of the form $\sum_{k=1}^{\lfloor x\rfloor}k^{m}$ and $\sum_{k=1}^{n}k^{m}$ for all exponents $m\in\mathbb{C}$. Our key tool will be the so called Weniger transformation \cite[\text{(4.1)}]{2} found by J. Weniger, transforming an inverse power series into an inverse factorial series \cite[\text{(1.1)}]{2}. This transformation of inverse power series was first found by Oskar Schl\"omilch around 1850 \cite{3,4,5} based on earlier works of James Stirling in 1730 \cite{6}.\\
In an expanded form, one of our summation formulas for the sum $\sum_{k=1}^{n}\sqrt{k}$, where $n\in\mathbb{N}$, looks like
\begin{equation}\label{square roots formula}
\begin{split}
\sum_{k=1}^{n}\sqrt{k}&=\frac{2}{3}n^{\frac{3}{2}}+\frac{1}{2}\sqrt{n}-\frac{1}{4\pi}\zeta\left(\frac{3}{2}\right)+\sqrt{n}\sum_{k=1}^{\infty}(-1)^{k+1}\frac{\sum_{l=1}^{k}\frac{(2l-3)!!}{2^{l}(l+1)!}B_{l+1}S_{k}^{(1)}(l)}{(n+1)(n+2)\cdots(n+k)}\\
&=\frac{2}{3}n^{\frac{3}{2}}+\frac{1}{2}\sqrt{n}-\frac{1}{4\pi}\zeta\left(\frac{3}{2}\right)+\frac{\sqrt{n}}{24(n+1)}+\frac{\sqrt{n}}{24(n+1)(n+2)}+\frac{53\sqrt{n}}{640(n+1)(n+2)(n+3)}\\
&\quad+\frac{79\sqrt{n}}{320(n+1)(n+2)(n+3)(n+4)}+\ldots,
\end{split}
\end{equation}
where the $B_{l}$'s are the Bernoulli numbers and $S_{k}^{(1)}(l)$ denotes the Stirling numbers of the first kind.\\
The above identity \eqref{square roots formula} is deduced by setting the variable $x:=n\in\mathbb{N}$ into the more general formula
\begin{equation}
\begin{split}
\sum_{k=1}^{\lfloor x\rfloor}\sqrt{k}&=\frac{2}{3}x^{\frac{3}{2}}-\frac{1}{4\pi}\zeta\left(\frac{3}{2}\right)-\sqrt{x}B_{1}(\{x\})+\sqrt{x}\sum_{k=1}^{\infty}(-1)^{k}\frac{\sum_{l=1}^{k}(-1)^{l}\frac{(2l-3)!!}{2^{l}(l+1)!}S_{k}^{(1)}(l)B_{l+1}(\{x\})}{(x+1)(x+2)\cdots(x+k)}\\
&=\frac{2}{3}x^{\frac{3}{2}}-\frac{1}{4\pi}\zeta\left(\frac{3}{2}\right)+\left(\frac{1}{2}-\{x\}\right)\sqrt{x}+\frac{\left(\frac{1}{4}\{x\}^{2}-\frac{1}{4}\{x\}+\frac{1}{24}\right)\sqrt{x}}{(x+1)}\\
&\quad+\frac{\left(\frac{1}{24}\{x\}^{3}+\frac{3}{16}\{x\}^{2}-\frac{11}{48}\{x\}+\frac{1}{24}\right)\sqrt{x}}{(x+1)(x+2)}+\frac{\left(\frac{1}{64}\{x\}^{4}+\frac{3}{32}\{x\}^{3}+\frac{21}{64}\{x\}^{2}-\frac{7}{16}\{x\}+\frac{53}{640}\right)\sqrt{x}}{(x+1)(x+2)(x+3)}\\
&\quad+\frac{\left(\frac{1}{128}\{x\}^{5}+\frac{19}{256}\{x\}^{4}+\frac{109}{384}\{x\}^{3}+\frac{29}{32}\{x\}^{2}-\frac{977}{768}\{x\}+\frac{79}{320}\right)\sqrt{x}}{(x+1)(x+2)(x+3)(x+4)}+\ldots,
\end{split}
\end{equation}
where this time the $B_{l}(\{x\})$'s are the fractional Bernoulli polynomials and $x\in\mathbb{R}^{+}$ is a positive real number. All other formulas in this article have a similar shape, when we expand them.\\

\vspace{0.2cm}

We have searched our resulting formulas in the literature and on the internet. We could find only two of them, namely equation \eqref{StirlingExpansion} and its analogues for the sums $\sum_{k=1}^{n}\frac{1}{k^{m}}$ with $m\in\mathbb{N}_{\geq2}$, which were already known to Stirling in 1730 \cite{3,7}, and equation \eqref{GregorioFontanaExpansion}, which was obtained by Gregorio Fontana around 1780 \cite{3,8}. Both of these formulas were originally found in another form without the use of Bernoulli numbers.\\
We believe that all other generalized Faulhaber formulas presented in this article are new and that our method to obtain them has not been recognized before.

\section{Definitions and Basic Facts}
\label{sec:Definitions and Basic Facts}

As usual, we denote the floor of $x$ by $\lfloor x\rfloor$ and the fractional part of $x$ by $\{x\}$.\\
The symbol $\mathbb{N}:=\{1,2,3,4,\ldots\}$ denotes the set of natural numbers and $\mathbb{R}^{+}:=\left\{x\in\mathbb{R}:x>0\right\}$ represents the set of positive real numbers. We also set $\mathbb{N}_{0}:=\mathbb{N}\cup\{0\}$ and $\mathbb{R}_{0}^{+}:=\mathbb{R}^{+}\cup\{0\}$. Moreover, we define $\mathbb{H}^{+}:=\{z\in\mathbb{C}:\re(z)>0\}$ and denote by $\zeta(s)$ the Riemann zeta function at the point $s\in\mathbb{C}\setminus\{1\}$. For a complex number $z=re^{i\varphi}\in\mathbb{C}$, we denote by $|z|=r\in\mathbb{R}_{0}^{+}$ its absolute value and by $\varphi=\arg(z)\in(-\pi,\pi]$ its argument or phase. We define for all $\theta\in\mathbb{R}$ the secant function by $\sec(\theta):=\frac{1}{\cos(\theta)}$.\\
The double factorial function for $n\in\mathbb{N}_{0}$ is defined by $n!!:=\prod_{k=0}^{\left\lfloor\frac{n-1}{2}\right\rfloor}(n-2k)$.

\begin{definition}(Pochhammer symbol)\;\cite[\text{p.\;1429}]{2}\\
We define the \emph{Pochhammer symbol} (or rising factorial function) $(z)_{k}$ by 
\begin{equation}
(z)_{k}:=z(z+1)(z+2)(z+3)\cdots(z+k-1)=\frac{\Gamma(z+k)}{\Gamma(z)},
\end{equation}
where $\Gamma(z)$ is the gamma function \cite[\text{(5.2.1), p.\;136}]{9} defined as the meromorphic continuation of the integral
\begin{equation}
\Gamma(z):=\int_{0}^{\infty}e^{-t}t^{z-1}dt\;\;\text{for all $z\in\mathbb{C}$ with $\re(z)>0$}
\end{equation}
to the whole complex plane $\mathbb{C}$.\\
\end{definition}

\begin{definition}(Stirling numbers of the first kind)\;\cite[\text{(A.2), p.\;1437}]{2}, \cite[\text{A008275}]{10}\\
Let $k,l\in\mathbb{N}_{0}$ be two non-negative integers such that $k\geq l\geq0$. We set the \emph{Stirling numbers of the first kind} $S_{k}^{(1)}(l)$ as the connecting coefficients in the identity
\begin{equation}
(z)_{k}=(-1)^{k}\sum_{l=0}^{k}(-1)^{l}S_{k}^{(1)}(l)z^{l},
\end{equation}
where $(z)_{k}$ is the rising factorial function. Furthermore, we set $S_{k}^{(1)}(l)=0$ if $k,l\in\mathbb{N}_{0}$ with $l>k$.
\end{definition}

\begin{definition}(Binomial Coefficients)\;\cite{11}\\
We introduce the \emph{binomial coefficient} ${z\choose s}$ for all $z\in\mathbb{C}$ and $s\in\mathbb{C}$ by \cite[\text{(5) and (11), pp.\;8-9}]{11}
\begin{equation}
\begin{split}
{z\choose s}:&=\frac{\Gamma(z+1)}{\Gamma(s+1)\Gamma(z-s+1)}.
\end{split}
\end{equation}
Moreover, we have for $z\in\mathbb{C}\setminus\{0,-1,-2,-3,\ldots\}$ the following asymptotic expansion as $k\rightarrow\infty$ \cite[\text{(18), p.\;2 and p.\;35}]{11}
\begin{equation}\label{binomial coefficient bound}
{z\choose k}=\frac{(-1)^{k}}{\Gamma(-z)k^{z+1}}+O\left(\frac{1}{k^{z+2}}\right)\;\;\text{for all $z\in\mathbb{C}$ and $k\in\mathbb{N}$}.
\end{equation}
\end{definition}

\begin{definition}(Bernoulli polynomials and Bernoulli numbers)\;\cite{1}, \cite{12}, \cite{13}, \cite{14}\\
We define for $n\in\mathbb{N}_{0}$ the \emph{$n$-th Bernoulli polynomial} $B_{n}(x)$ via the following exponential generating function \cite{1} as
\begin{equation}
\frac{te^{xt}}{e^{t}-1}=\sum_{n=0}^{\infty}\frac{B_{n}(x)}{n!}t^{n}\;\;\forall t\in\mathbb{C}\;\text{with $|t|<2\pi$}.
\end{equation}
We also define the \emph{$n$-th Bernoulli number} $B_{n}$ as the value of the $n$-th Bernoulli polynomial $B_{n}(x)$ at the point $x=0$, that is
\begin{equation}\label{Bernoulli number}
B_{n}:=B_{n}(0).
\end{equation}
It holds for all $n\in\mathbb{N}_{0}$ the explicit formula \cite[\text{Proposition 23.2, p.\;86}]{12}
\begin{equation}\label{Bernoulli polynomial}
\begin{split}
B_{n}(x)&=\sum_{k=0}^{n}{n\choose k}B_{k}x^{n-k}.
\end{split}
\end{equation}
It holds for all $0\leq y\leq1$ that \cite[\text{Corollary B.4, (B.21), p.\;500}]{13}
\begin{equation}
\left|B_{1}(y)\right|\leq\frac{1}{2}\;\;\text{and that}\;\;\left|B_{n}(y)\right|\leq\frac{2\zeta(n)n!}{(2\pi)^{n}}\;\;\text{for all $n\in\mathbb{N}_{\geq2}$}.
\end{equation}
We have that \cite[\text{(1.10), p.\;282}]{14}
\begin{equation}\label{Bernoulli relation}
(-1)^{k}B_{k}(1-y)=B_{k}(y)\;\;\text{for all $k\in\mathbb{N}_{0}$ and $0\leq y\leq1$}.
\end{equation}
\end{definition}

\begin{definition}\label{digammafunction}(Digamma function)\;\cite[\text{pp.\;136-138}]{9}\\
We set the \emph{digamma function} $\psi(z)$ to
\begin{equation}
\begin{split}
\psi(z):&=\frac{\Gamma'(z)}{\Gamma(z)}\;\;\text{for all $z\in\mathbb{C}\setminus\{0,-1,-2,-3,\ldots\}$}.
\end{split}
\end{equation}
Therefore, $\psi(z)$ is an analytic function for all $z\in\mathbb{C}\setminus{(-\infty,0]}$. For all $z\in\mathbb{C}\setminus\{0,-1,-2,-3,\ldots\}$, we have the identity \cite[\text{(5.5.2), p.\;138}]{9}
\begin{equation}\label{digamma identity}
\begin{split}
\psi(z+1)&=\psi(z)+\frac{1}{z}
\end{split}
\end{equation}
and for all $n\in\mathbb{N}$ we have the formula \cite[\text{(5.4.14), p.\;137}]{9}
\begin{equation}\label{digamma summation formula}
\begin{split}
\sum_{k=1}^{n}\frac{1}{k}&=\psi(n+1)+\gamma.
\end{split}
\end{equation}
\end{definition}

\begin{definition}\label{Hurwitzzetafunction}(Hurwitz zeta function)\;\cite[\text{p.\;607}]{9}\\
We define the \emph{Hurwitz zeta function} $\zeta(s,z)$ for all complex numbers $s\in\mathbb{C}$ with $\re(s)>1$ and all $z\in\mathbb{C}\setminus\{0,-1,-2,-3,\ldots\}$ by
\begin{equation}
\begin{split}
\zeta(s,z):&=\sum_{k=0}^{\infty}\frac{1}{(k+z)^{s}}.
\end{split}
\end{equation}
The function $\zeta(s,z)$ extends to an analytic function on $\mathbb{C}\setminus\{0,-1,-2,-3,\ldots\}$ in the $z$-variable and for every $z\notin\{0,-1,-2,-3,\ldots\}$ to a meromorphic function in $s\in\mathbb{C}\setminus\{1\}$ with a simple pole at $s=1$. It satisfies for all $s\in\mathbb{C}\setminus\{1\}$ and all $z\in\mathbb{C}\setminus\{0,-1,-2,-3,\ldots\}$ the identity
\begin{equation}\label{Hurwitz zeta identity}
\begin{split}
\zeta(s,z+1)&=\zeta(s,z)-\frac{1}{z^{s}}.
\end{split}
\end{equation}
For all $m\in\mathbb{C}\setminus\{1\}$ and all $n\in\mathbb{N}$ we have the formula \cite[\text{(1.2), p.\;2}]{15}
\begin{equation}\label{Hurwitz summation formula}
\begin{split}
\sum_{k=1}^{n}k^{m}&=\zeta(-m)-\zeta(-m,n+1).
\end{split}
\end{equation}
\end{definition}

\section{The Structure of Inverse Factorial Series Expansions}
\label{sec:The Structure of Inverse Factorial Series Expansions}

In this section we study the structure of inverse factorial series expansions for analytic functions possessing an asymptotic series expansion by applying a Theorem of G. N. Watson \cite[\text{Theorem\;2, p.\;45}]{16}. The main result of this section is Theorem \ref{Structure of inverse factorial series expansions}, from which we will later deduce convergent inverse factorial series expansions for the functions $\zeta(s,z+1-y)$ and $\psi(z+1-y)$, where $0\leq y\leq1$.\\
For this procedure, we need the following variant of a result found by J. Weniger \cite[\text{(4.1), p.\;1433}]{2}

\begin{lemma}(finite Weniger transformation)\label{finite Weniger transformation}\;\cite{2}\\
For every finite inverse power series $\sum_{k=1}^{n}\frac{a_{k}}{z^{k}}$, where the $a_{k}$'s are any complex numbers and $n\in\mathbb{N}$, the following transformation formula holds
\begin{equation}
\label{finite Weniger transformation formula}
\begin{split}
\sum_{k=1}^{n}\frac{a_{k}}{z^{k}}&=\sum_{k=1}^{n}\frac{(-1)^{k}\sum_{l=1}^{k}(-1)^{l}S_{k}^{(1)}(l)a_{l}}{(z+1)(z+2)\cdots(z+k)}+O\left(\frac{1}{z^{n+1}}\right)\;\;\;\;\text{as $z\rightarrow\infty$}.
\end{split}
\end{equation}
Moreover, we have that
\begin{equation}
\label{Weniger transformation formula}
\begin{split}
\sum_{k=1}^{n}\frac{a_{k}}{z^{k}}&=\sum_{k=1}^{\infty}\frac{(-1)^{k}\sum_{l=1}^{n}(-1)^{l}S_{k}^{(1)}(l)a_{l}}{(z+1)(z+2)\cdots(z+k)}.
\end{split}
\end{equation}
\end{lemma}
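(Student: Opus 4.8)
The plan is to reduce both identities to a single exact scalar identity and then recover the stated formulas by linearity and truncation. Since every quantity in \eqref{finite Weniger transformation formula} and \eqref{Weniger transformation formula} is linear in the coefficients $a_1,\dots,a_n$, it suffices to prove, for each fixed integer $l\ge1$, the \emph{single-power expansion}
\[
\frac{1}{z^{l}}=\sum_{k=l}^{\infty}\frac{(-1)^{k+l}S_{k}^{(1)}(l)}{(z+1)(z+2)\cdots(z+k)},\qquad \re(z)>0.
\]
Granting this, multiplying by $a_l$, summing over $1\le l\le n$, and interchanging the (finite) sum over $l$ with the convergent sum over $k$ regroups the right-hand side by the denominator $(z+1)\cdots(z+k)$; since $S_k^{(1)}(l)=0$ for $l>k$, the inner coefficient becomes exactly $(-1)^k\sum_{l=1}^{n}(-1)^lS_k^{(1)}(l)a_l$, which is \eqref{Weniger transformation formula}. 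Truncating the single-power expansion at $k=n$ and absorbing the tail $\sum_{k>n}$ into the error term yields \eqref{finite Weniger transformation formula}, the inner sum now running only to $k$ because the omitted terms vanish.

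To prove the single-power expansion I would pass to the unsigned coefficients $c(k,l):=(-1)^{k+l}S_k^{(1)}(l)\ge0$, which by the definition of the Stirling numbers are precisely the coefficients in $(z)_k=\sum_{l=0}^{k}c(k,l)z^{l}$; thus $c(l,l)=1$, $c(k,1)=(k-1)!$, $c(k,l)=0$ for $l>k$, and they obey the recurrence $c(k+1,l)=c(k,l-1)+k\,c(k,l)$. Writing $F_l(z):=\sum_{k\ge l}c(k,l)/\big((z+1)\cdots(z+k)\big)$, the standard asymptotics $S_k^{(1)}(l)=O\!\big((k-1)!(\log k)^{l-1}\big)$ together with $1/\big((z+1)\cdots(z+k)\big)=\Gamma(z+1)/\Gamma(z+k+1)$ show that $F_l$ converges absolutely for $\re(z)>0$. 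The goal is then to prove $F_l(z)=z^{-l}$ by induction on $l$.

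The base case $l=1$ is a telescoping sum: with $T_k:=(k-1)!/(z)_k$ one checks $T_k-T_{k+1}=(k-1)!/\big((z+1)\cdots(z+k)\big)$, while $T_1=1/z$ and $T_{k+1}\to0$, so $F_1(z)=1/z$. For the inductive step I would establish $z\,F_l(z)=F_{l-1}(z)$. Using $z=(z+k)-k$ and $(z+1)\cdots(z+k)=\big((z+1)\cdots(z+k-1)\big)(z+k)$ gives $z/\big((z+1)\cdots(z+k)\big)=1/\big((z+1)\cdots(z+k-1)\big)-k/\big((z+1)\cdots(z+k)\big)$; carrying this out on the partial sums, reindexing the first part by $k\mapsto k-1$, and invoking $c(k+1,l)-k\,c(k,l)=c(k,l-1)$ collapses $z\,F_l^{(N)}(z)$ to $F_{l-1}^{(N-1)}(z)$ minus a boundary term $N\,c(N,l)/\big((z+1)\cdots(z+N)\big)$. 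Letting $N\to\infty$, this boundary term tends to $0$ for $\re(z)>0$, yielding $z\,F_l=F_{l-1}$ and hence, with the base case, $F_l(z)=z^{-l}$.

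The main obstacle is this inductive step: the two series produced by the splitting $z=(z+k)-k$ are individually only conditionally summable (each behaves like $\sum(\log k)^{l-1}k^{-\re(z)}$), so the manipulation must be carried out at the level of partial sums, and the whole argument hinges on the telescoping cancellation supplied by the Stirling recurrence together with the vanishing of $N\,c(N,l)/\big((z+1)\cdots(z+N)\big)$, which is exactly what forces the half-plane $\re(z)>0$. An alternative, more analytic route to the single-power expansion uses the Beta integral $1/\big((z+1)\cdots(z+k)\big)=\tfrac{1}{(k-1)!}\int_0^1 t^{z}(1-t)^{k-1}\,dt$ and the exponential generating function $\sum_{k\ge l}\tfrac{c(k,l)}{k!}u^{k}=\tfrac{1}{l!}(-\log(1-u))^{l}$, after which term-by-term integration and the substitution $t=e^{-s}$ reduce the claim to $\tfrac{1}{(l-1)!}\int_0^\infty s^{l-1}e^{-sz}\,ds=z^{-l}$; this avoids the induction at the cost of justifying the interchange of summation and integration.
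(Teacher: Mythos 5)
Your proof is correct, and it rests on the same skeleton as the paper's: everything is reduced by linearity to the single-power expansion
\begin{equation*}
\frac{1}{z^{l}}=\sum_{k\geq l}\frac{(-1)^{k+l}S_{k}^{(1)}(l)}{(z+1)(z+2)\cdots(z+k)},\qquad \re(z)>0,
\end{equation*}
after which \eqref{Weniger transformation formula} follows by interchanging the finite sum over $l$ with the convergent sum over $k$ (using $S_{k}^{(1)}(l)=0$ for $l>k$), and \eqref{finite Weniger transformation formula} follows by truncation. The genuine difference is what is done with that key expansion: the paper simply cites it from Weniger and Nielsen, together with its truncated $O\left(z^{-n-1}\right)$ form, whereas you prove it --- by induction on $l$, with the telescoping sum for the base case $l=1$, the partial-sum manipulation driven by the recurrence $c(k+1,l)=c(k,l-1)+k\,c(k,l)$ for the step $z\,F_{l}=F_{l-1}$, and the boundary term $N\,c(N,l)/\bigl((z+1)\cdots(z+N)\bigr)\rightarrow 0$, which is precisely where the hypothesis $\re(z)>0$ enters. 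Your remark that the two series produced by the splitting $z=(z+k)-k$ may diverge individually when $0<\re(z)\leq 1$, so that the cancellation must be exploited at the level of partial sums, is exactly the right point of care; the Beta-integral/EGF alternative you sketch is also a correct classical route (with your normalization $\frac{1}{(k-1)!}\int_{0}^{1}t^{z}(1-t)^{k-1}dt$ the substitution $t=e^{-s}$ does reduce the claim to $\frac{1}{(l-1)!}\int_{0}^{\infty}s^{l-1}e^{-sz}ds=z^{-l}$). In short, the paper buys brevity by citation; you buy self-containedness. The one step where you are no more (and no less) rigorous than the paper is the final truncation: the assertion that the tail $\sum_{k>n}$ is $O\left(z^{-n-1}\right)$ as $z\rightarrow\infty$ deserves a short argument, since the numerators grow like $(k-1)!(\log k)^{n-1}$ --- for instance, split the sum at $k\approx|z|$ and use $\bigl|(z+1)\cdots(z+k)\bigr|\geq\prod_{j=1}^{k}(\re(z)+j)$, or bound the tail through your Beta-integral representation; the paper hides this same assertion inside its citation.
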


\begin{proof}
We have for $l\in\mathbb{N}$ that \cite[\text{(A.14), p.\;1438}]{2}, \cite[\text{(6), p.\;78}]{17}
\begin{displaymath}
\begin{split}
\frac{1}{z^{l}}&=\sum_{k=0}^{\infty}\frac{(-1)^{k}S_{k+l}^{(1)}(l)}{(z+1)(z+2)\cdots(z+k+l)}\\
&=\sum_{k=1}^{\infty}\frac{(-1)^{k-l}S_{k}^{(1)}(l)}{(z+1)(z+2)\cdots(z+k)}\\
&=\sum_{k=1}^{n}\frac{(-1)^{k-l}S_{k}^{(1)}(l)}{(z+1)(z+2)\cdots(z+k)}+O\left(\frac{1}{z^{n+1}}\right)\;\;\text{as $z\rightarrow\infty$}.
\end{split}
\end{displaymath}
Therefore, we obtain that
\begin{displaymath}
\begin{split}
\sum_{k=1}^{n}\frac{a_{k}}{z^{k}}=\sum_{l=1}^{n}\frac{a_{l}}{z^{l}}&=\sum_{l=1}^{n}a_{l}\sum_{k=1}^{n}\frac{(-1)^{k-l}S_{k}^{(1)}(l)}{(z+1)(z+2)\cdots(z+k)}+O\left(\frac{1}{z^{n+1}}\right)\\
&=\sum_{k=1}^{n}\frac{(-1)^{k}\sum_{l=1}^{n}(-1)^{l}S_{k}^{(1)}(l)a_{l}}{(z+1)(z+2)\cdots(z+k)}+O\left(\frac{1}{z^{n+1}}\right)\\
&=\sum_{k=1}^{n}\frac{(-1)^{k}\sum_{l=1}^{k}(-1)^{l}S_{k}^{(1)}(l)a_{l}}{(z+1)(z+2)\cdots(z+k)}+O\left(\frac{1}{z^{n+1}}\right)\;\;\text{as $z\rightarrow\infty$},
\end{split}
\end{displaymath}
which is the first claimed formula \eqref{finite Weniger transformation formula}.\\
The second formula \eqref{Weniger transformation formula} follows from the calculation
\begin{displaymath}
\begin{split}
\sum_{k=1}^{n}\frac{a_{k}}{z^{k}}=\sum_{l=1}^{n}\frac{a_{l}}{z^{l}}&=\sum_{l=1}^{n}a_{l}\sum_{k=1}^{\infty}\frac{(-1)^{k-l}S_{k}^{(1)}(l)}{(z+1)(z+2)\cdots(z+k)}\\
&=\sum_{k=1}^{\infty}\frac{(-1)^{k}\sum_{l=1}^{n}(-1)^{l}S_{k}^{(1)}(l)a_{l}}{(z+1)(z+2)\cdots(z+k)},
\end{split}
\end{displaymath}
because we can always interchange a finite summation with an infinite summation.
\end{proof}

\begin{lemma}\label{Uniqueness of inverse factorial series expansions}(Uniqueness of inverse factorial series expansions)\\
If a function $f(z)$ has for all $z\in\mathbb{C}$ with $\re(z)>0$ the absolutely convergent series expansion
\begin{displaymath}
\begin{split}
f(z)&=\sum_{k=1}^{\infty}\frac{b_{k}}{(z+1)(z+2)\cdots(z+k)}
\end{split}
\end{displaymath}
and the asymptotic expansion
\begin{displaymath}
\begin{split}
f(z)&=\sum_{k=1}^{n}\frac{c_{k}}{(z+1)(z+2)\cdots(z+k)}+O\left(\frac{1}{z^{n+1}}\right)\;\;\;\;\text{as $z\rightarrow\infty$},
\end{split}
\end{displaymath}
then we have that $c_{k}=b_{k}$ for all $k\in\mathbb{N}$ and we have the absolutely convergent series expansion
\begin{displaymath}
\begin{split}
f(z)&=\sum_{k=1}^{\infty}\frac{c_{k}}{(z+1)(z+2)\cdots(z+k)}.
\end{split}
\end{displaymath}
\end{lemma}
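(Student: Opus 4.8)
The plan is to prove the lemma in two steps: first, to upgrade the \emph{convergent} inverse factorial series into an \emph{asymptotic} one carrying the very same coefficients $b_k$; and second, to invoke the uniqueness of asymptotic expansions relative to the asymptotic scale $g_k(z):=1/\big((z+1)(z+2)\cdots(z+k)\big)$ in order to force $c_k=b_k$. Since the coefficients of an asymptotic expansion are determined already by the behaviour along any single path to infinity, I would carry out everything for real $z\to+\infty$; this is legitimate because the positive real axis lies in the region $\re(z)>0$ where the convergent expansion is assumed to hold, and the hypothesized asymptotic expansion holds there as well.

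For the first step, fix $n\in\mathbb{N}$ and split the convergent series at index $n$. The point is to bound the tail $\sum_{k=n+1}^{\infty} b_k\,g_k(z)$ by $O(1/z^{n+1})$ \emph{uniformly}. First I would factor, for each $k\ge n+1$ and real $z>0$,
\[
\frac{1}{(z+1)\cdots(z+k)}=\frac{1}{(z+1)\cdots(z+n+1)}\cdot\frac{1}{(z+n+2)\cdots(z+k)},
\]
and observe that for $z\ge z_0>0$ the second factor is monotonically decreasing in $z$, hence dominated by its value at $z_0$. Summing the resulting bound and comparing with the convergent series at the single point $z_0$ shows that $\sum_{k=n+1}^{\infty}|b_k|/\big((z_0+n+2)\cdots(z_0+k)\big)$ equals a finite constant $M_n$ independent of $z$. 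Since $(z+1)\cdots(z+n+1)\ge z^{n+1}$ for $z>0$, this yields
\[
\Big|\sum_{k=n+1}^{\infty}\frac{b_k}{(z+1)\cdots(z+k)}\Big|\le\frac{M_n}{z^{n+1}},
\]
that is, $f(z)=\sum_{k=1}^{n}b_k\,g_k(z)+O(1/z^{n+1})$ as $z\to+\infty$.

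For the second step, with both asymptotic expansions in hand, I would subtract them to get $\sum_{k=1}^{n}(b_k-c_k)g_k(z)=O(1/z^{n+1})$ and then prove every difference vanishes. Writing $d_k:=b_k-c_k$ and letting $j$ be the least index with $d_j\ne0$ (if one exists), I would use $g_k(z)\sim z^{-k}$ to see that the finite sum is asymptotically $d_j z^{-j}(1+o(1))$; multiplying by $z^{j}$ and letting $z\to+\infty$ then forces $d_j=0$, since the right-hand side is $O(z^{\,j-n-1})\to0$ because $j\le n$. This contradiction shows $c_k=b_k$ for every $k\le n$, and as $n$ is arbitrary it holds for all $k\in\mathbb{N}$. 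The claimed absolutely convergent expansion for $f$ with the coefficients $c_k$ then follows at once by substitution into the convergent series.

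The main obstacle is the uniform tail estimate in the first step: the hypothesis guarantees absolute convergence only pointwise for each $\re(z)>0$, so I must convert this into a bound that is uniform in $z$ along the approach to infinity. The monotonicity-in-$z$ comparison against a fixed basepoint $z_0$ is precisely what bridges that gap; once the uniform constant $M_n$ is secured, the remaining extraction of coefficients from the asymptotic sequence $\{g_k\}$ is routine.
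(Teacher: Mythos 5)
Your proof is correct and follows essentially the same route as the paper: upgrade the absolutely convergent inverse factorial series to an asymptotic expansion with the same coefficients $b_k$, then match coefficients term by term by multiplying by a suitable factor and letting $z\to\infty$ along the positive real axis. The only differences are cosmetic — your uniform tail estimate via monotone comparison at a basepoint $z_0$ carefully justifies the $O\left(1/z^{n+1}\right)$ bound that the paper asserts in one line, and you extract coefficients by a least-nonzero-index argument where the paper uses induction on $n$.
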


\begin{proof}
From the given absolutely convergent inverse factorial series expansion of $f(z)$, we deduce for all $n\in\mathbb{N}$ that
\begin{displaymath}
\begin{split}
\sum_{k=n}^{\infty}\frac{b_{k}}{(z+1)(z+2)\cdots(z+k)}&\leq\sum_{k=n}^{\infty}\frac{|b_{k}|}{|(z+1)||(z+2)|\cdots|(z+k)|}=O\left(\frac{1}{z^{n}}\right)\;\;\text{as $z\rightarrow\infty$},
\end{split}
\end{displaymath}
which means that $\lim_{z\rightarrow\infty}(f(z))=0$ and that we have
\begin{displaymath}
\begin{split}
z^{m}\sum_{k=n}^{\infty}\frac{b_{k}}{(z+1)(z+2)\cdots(z+k)}\longrightarrow0\;\;\text{as $z\rightarrow\infty$ for all $m\in\{0,1,2,\ldots,n-1\}$}.
\end{split}
\end{displaymath}
The result now follows by induction on $n\in\mathbb{N}$ via a repeated application of the above limit.\\
For $n=1$, we get
\begin{displaymath}
\begin{split}
f(z)&=\frac{b_{1}}{z+1}+\sum_{k=2}^{\infty}\frac{b_{k}}{(z+1)(z+2)\cdots(z+k)}
=\frac{c_{1}}{z+1}+O\left(\frac{1}{z^{2}}\right)\;\;\;\;\text{as $z\rightarrow\infty$},
\end{split}
\end{displaymath}
which implies by multiplying both sides with $z+1$ and letting $z\rightarrow\infty$ that $c_{1}=b_{1}$.\\
Similarly, for $n=2$, we get using $c_{1}=b_{1}$ that
\begin{displaymath}
\begin{split}
f(z)-\frac{c_{1}}{z+1}&=\frac{b_{2}}{(z+1)(z+2)}+\sum_{k=3}^{\infty}\frac{b_{k}}{(z+1)(z+2)\cdots(z+k)}
=\frac{c_{2}}{(z+1)(z+2)}+O\left(\frac{1}{z^{3}}\right),
\end{split}
\end{displaymath}
which implies by multiplying both sides with $(z+1)(z+2)$ and letting $z\rightarrow\infty$ that $c_{2}=b_{2}$.\\
In general, we can induct from $n-1$ to $n$ using that $c_{k}=b_{k}$ for all $k\in\{1,2,3,\ldots,n-1\}$ by the identity
\begin{displaymath}
\begin{split}
f(z)-\sum_{k=1}^{n-1}\frac{c_{k}}{(z+1)(z+2)\cdots(z+k)}&=\frac{b_{n}}{(z+1)(z+2)\cdots(z+n)}+\sum_{k=n+1}^{\infty}\frac{b_{k}}{(z+1)(z+2)\cdots(z+k)}\\
&=\frac{c_{n}}{(z+1)(z+2)\cdots(z+n)}+O\left(\frac{1}{z^{n+1}}\right)\;\;\;\;\text{as $z\rightarrow\infty$},
\end{split}
\end{displaymath}
again by multiplying both sides with $(z+1)(z+2)\cdots(z+n)$ and letting $z\rightarrow\infty$ to conclude that $c_{k}=b_{k}$ holds also for $k=n$.\\
This proves that $c_{k}=b_{k}$ for all $k\in\mathbb{N}$.
\end{proof}

\noindent The key to our generalized Faulhaber formulas will be the following
\begin{theorem}\label{Watson's Transformation Theorem}(Watson's Transformation Theorem)\;\cite[\text{Theorem\;2, p.\;45}]{16}\\
Let $f(z)$ be a function of $z\in\mathbb{C}$ which is analytic when $\re(z)>0$; and let $f(z)$ be also analytic in the region $D$ of the complex plane defined by
\begin{displaymath}
\begin{split}
D:&=\left\{z\in\mathbb{C}:|z|>\gamma\;\text{and}\;|\arg(z)|\leq\frac{\pi}{2}+\alpha+3\delta\right\},
\end{split}
\end{displaymath}
where $\gamma\geq0$ is a finite number, $\alpha>0$, $\delta>0$ and $\alpha+3\delta<\frac{\pi}{2}$.\\
In the region $D$ let $f(z)$ possess the asymptotic expansion
\begin{displaymath}
\begin{split}
f(z)&=\sum_{k=0}^{n}\frac{a_{k}}{z^{k}}+R_{n}(z)=a_{0}+\frac{a_{1}}{z}+\frac{a_{2}}{z^{2}}+\frac{a_{3}}{z^{3}}+\frac{a_{4}}{z^{4}}+\ldots+\frac{a_{n}}{z^{n}}+R_{n}(z),
\end{split}
\end{displaymath}
where
\begin{displaymath}
\begin{split}
|a_{n}|&<A\rho^{n}n!\;\;\;\;\text{and}\;\;\;\;|R_{n}(z)z^{n+1}|<B\sigma^{n}n!,
\end{split}
\end{displaymath}
with some constants $A$, $B$, $\rho$ and $\sigma$, which are independent of $n$.\\
Let $M\leq M_{0}$ be any positive real number, where $M_{0}$ is the largest positive root of the equation
\begin{displaymath}
\begin{split}
e^{-\frac{2\cos(\alpha)}{\rho M_{0}}}-2\cos\left(\frac{\sin(\alpha)}{\rho M_{0}}\right)\cdot e^{-\frac{\cos(\alpha)}{\rho M_{0}}}+1-p^{2}&=0,
\end{split}
\end{displaymath}
where
\begin{displaymath}
\begin{split}
1&<p<1+e^{-\pi\cot(\alpha)}.
\end{split}
\end{displaymath}
Then the function $f(z)$ can be expanded into the absolutely convergent series
\begin{displaymath}
\begin{split}
f(z)&=b_{0}+\sum_{k=1}^{\infty}\frac{b_{k}}{(Mz+w+1)(Mz+w+2)\cdots(Mz+w+k)},
\end{split}
\end{displaymath}
when $\re(z)>0$ and $w\in\mathbb{C}$ with $\re(w)\geq0$.
\end{theorem}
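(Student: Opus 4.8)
The plan is to represent $f$ as a Borel--Laplace integral and then convert that integral into an inverse factorial series by an exponential change of variables. The bound $|a_{n}|<A\rho^{n}n!$ shows that the formal Borel transform $\phi(t):=\sum_{m=0}^{\infty}\frac{a_{m+1}}{m!}t^{m}$ is analytic on the disc $|t|<1/\rho$, while the remainder bound $|R_{n}(z)z^{n+1}|<B\sigma^{n}n!$, holding throughout the sector $D$ of half-angle $\frac{\pi}{2}+\alpha+3\delta>\frac{\pi}{2}$ (so of total opening $>\pi$), is exactly the Gevrey/Watson hypothesis guaranteeing that $f$ is the Borel sum of its series and that $\phi$ continues analytically, with controlled growth, along $[0,\infty)$. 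The first step is therefore to establish
\begin{equation*}
f(z)=a_{0}+\int_{0}^{\infty}e^{-zt}\,\phi(t)\,dt\qquad(\re(z)>0),
\end{equation*}
where the extra angular room $3\delta$ is what permits rotating the ray of integration so as to cover the entire right half-plane.

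Next I would substitute $s=1-e^{-t}$, so that $e^{-zt}=(1-s)^{z}$ and $dt=ds/(1-s)$, collecting the surviving factor into $\Psi(s):=\phi\!\left(-\log(1-s)\right)/(1-s)$, which turns the Laplace integral into the Euler-type integral
\begin{equation*}
f(z)=a_{0}+\int_{0}^{1}(1-s)^{z}\,\Psi(s)\,ds.
\end{equation*}
Expanding $\Psi(s)=\sum_{k\ge1}\frac{b_{k}}{(k-1)!}s^{k-1}$ and integrating term by term against the beta integral $\int_{0}^{1}(1-s)^{z}s^{k-1}\,ds=\frac{(k-1)!}{(z+1)(z+2)\cdots(z+k)}$ produces exactly the asserted factorial series, with $b_{0}=a_{0}$. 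Since this construction also reproduces the given asymptotic expansion, Lemma \ref{Uniqueness of inverse factorial series expansions} identifies its coefficients with those delivered formally by the Weniger transformation of Lemma \ref{finite Weniger transformation}, namely $b_{k}=(-1)^{k}\sum_{l=1}^{k}(-1)^{l}S_{k}^{(1)}(l)a_{l}$; this settles the case $M=1$, $w=0$.

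To reach the general denominators $(Mz+w+1)(Mz+w+2)\cdots(Mz+w+k)$, I would apply the preceding two steps not to $f$ but to $g(\zeta):=f\!\left((\zeta-w)/M\right)$ with $\zeta=Mz+w$; expanding $(\zeta-w)^{-n}$ in powers of $1/\zeta$ shows that $g$ inherits a Gevrey-1 asymptotic series whose coefficients grow like $M^{n}n!$, so its effective Borel radius becomes $1/(M\rho)$ and the candidate singularities of the associated $\Psi$ are the images under $s=1-e^{-t}$ of the circle $|t|=1/(M\rho)$ on which $\phi$ may first become singular. Term-by-term integration and absolute convergence for $\re(z)>0$ are legitimate precisely when the nearest of these images lies strictly outside the closed unit disc; bounding $|b_{k}|$ by Cauchy's inequality from the radius of analyticity of $\Psi$ and pairing this with the decay of $(k-1)!/\bigl(|z+1|\,|z+2|\cdots|z+k|\bigr)$ then yields the claimed absolutely convergent expansion.

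The main obstacle is this final quantitative step: one must track how the singular locus of the Borel transform is carried by the transcendental map $s=1-e^{-t}$, over the rotated sectors needed to reach $|\arg(z)|$ up to $\frac{\pi}{2}+\alpha$, and decide exactly when the unit circle $|s|=1$ stays clear of its image. The parameter $p$ constrained by $1<p<1+e^{-\pi\cot(\alpha)}$ measures the admissible geometric margin, and the largest positive root $M_{0}$ of the stated transcendental equation is precisely the threshold value of $M$ at which this image first meets $|s|=1$; carrying out this geometry carefully is what simultaneously produces the equation defining $M_{0}$ and forces the restriction $M\le M_{0}$.
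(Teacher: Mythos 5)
The first thing to note is that the paper does not actually prove this theorem: its ``proof'' consists of the single sentence referring the reader to Watson's 1912 paper \cite{16}, since the result is quoted there verbatim as an external tool. So your attempt must be measured against Watson's own argument, and to your credit your outline reproduces its skeleton: a Borel--Laplace representation $f(z)=a_{0}+\int_{0}^{\infty}e^{-zt}\phi(t)\,dt$ justified by the Gevrey-type remainder bound on a sector of opening greater than $\pi$, the substitution $s=1-e^{-t}$, and term-by-term integration against $\int_{0}^{1}(1-s)^{z}s^{k-1}\,ds=(k-1)!/\bigl((z+1)\cdots(z+k)\bigr)$. Your geometric reading of the transcendental equation is also exactly right: for $t_{0}=e^{\pm i\alpha}/(\rho M)$ one has $|1-e^{-t_{0}}|^{2}=e^{-2\cos(\alpha)/(\rho M)}-2\cos\bigl(\sin(\alpha)/(\rho M)\bigr)e^{-\cos(\alpha)/(\rho M)}+1$, so the equation for $M_{0}$ says that the images of the nearest possible singularities of the Borel transform land on the circle $|s|=p$, and $1+e^{-\pi\cot(\alpha)}$ is the modulus at which the image of the boundary ray $\arg(t)=\alpha$ re-crosses the positive real axis, which is why $p$ must stay below it.

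Nevertheless, what you have written is a plan, not a proof, and it has genuine gaps beyond the one you concede. First, the representation $f(z)=a_{0}+\int_{0}^{\infty}e^{-zt}\phi(t)\,dt$ for all $\re(z)>0$, together with the analytic continuation of $\phi$ past its disc of convergence $|t|<1/\rho$ into a sector around the positive real axis with controlled growth, is itself a Watson/Nevanlinna summability theorem of essentially the same depth as the statement being proved; invoking it as known replaces one citation by another. Second, your convergence mechanism --- Cauchy estimates from analyticity of $\Psi$ on a disc of radius exceeding $1$ --- fails as stated: $\Psi(s)=\phi\bigl(-\log(1-s)\bigr)/(1-s)$ is generically \emph{singular at} $s=1$ (the image of $t=+\infty$), because $\phi$ need not decay along the ray; at best one has $|\Psi(s)|=O\bigl((1-s)^{-1-\varepsilon}\bigr)$ for every $\varepsilon>0$. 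Hence $\Psi$ is never analytic on a closed disc of radius $p>1$, the Taylor coefficients $b_{k}/(k-1)!$ do not decay geometrically, and the actual argument must combine analyticity on the closed unit disc punctured at $s=1$ (this is where $M\leq M_{0}$ and $p$ enter) with growth control near $s=1$ inherited from the remainder bounds, giving coefficients $O(k^{\varepsilon})$ and then absolute convergence for $\re(z)>0$ via $(k-1)!/|(z+1)\cdots(z+k)|\sim|\Gamma(z+1)|\,k^{-\re(z)-1}$. Neither this, nor the derivation of the $M_{0}$ equation and the constraint on $p$ that you yourself flag as ``the main obstacle,'' is carried out --- and that geometry is precisely the content of Watson's forty pages. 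Finally, your last step (identifying $b_{k}$ with the Weniger coefficients via Lemma \ref{Uniqueness of inverse factorial series expansions} and Lemma \ref{finite Weniger transformation}) is not needed for this statement, which asserts only existence of \emph{some} absolutely convergent expansion; that identification is exactly what the paper does afterwards, in Theorem \ref{Structure of inverse factorial series expansions}, so including it here conflates the quoted theorem with its corollary.
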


\begin{proof}
The proof of this Theorem is given in Watson's paper \cite{16}.
\end{proof}

It follows
\begin{theorem}\label{Structure of inverse factorial series expansions}(Structure of inverse factorial series expansions)\\
Let $f(z)$ be a function of $z\in\mathbb{C}$ which is analytic when $\re(z)>0$; and let $f(z)$ be also analytic in the region $D$ of the complex plane defined by
\begin{displaymath}
\begin{split}
D:&=\left\{z\in\mathbb{C}:|z|>0\;\text{and}\;|\arg(z)|\leq\pi-\varepsilon,\;\text{where $\varepsilon>0$ is arbitrarily small}\right\}.
\end{split}
\end{displaymath}
In the region $D$ let $f(z)$ possess the asymptotic expansion
\begin{displaymath}
\begin{split}
f(z)&=\sum_{k=1}^{n}\frac{a_{k}}{z^{k}}+R_{n}(z)=\frac{a_{1}}{z}+\frac{a_{2}}{z^{2}}+\frac{a_{3}}{z^{3}}+\ldots+\frac{a_{n}}{z^{n}}+R_{n}(z),
\end{split}
\end{displaymath}
where
\begin{displaymath}
\begin{split}
|a_{n}|&<A\rho^{n}n!\;\;\;\;\text{and}\;\;\;\;|R_{n}(z)z^{n+1}|<B\sigma^{n}n!,
\end{split}
\end{displaymath}
with some constants $A$, $B$, $\rho<\frac{3}{\pi}$ and $\sigma$, which are independent of $n$.\\
Then the function $f(z)$ is equal to the absolutely convergent inverse factorial series
\begin{equation}\label{structure of inverse factorial series expansion}
\begin{split}
f(z)&=\sum_{k=1}^{\infty}\frac{(-1)^{k}\sum_{l=1}^{k}(-1)^{l}S_{k}^{(1)}(l)a_{l}}{(z+1)(z+2)\cdots(z+k)}
\end{split}
\end{equation}
for $\re(z)>0$.
\end{theorem}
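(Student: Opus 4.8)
The plan is to combine the three preceding results in sequence: use Watson's Transformation Theorem to guarantee the \emph{existence} of an absolutely convergent inverse factorial series for $f$, use the finite Weniger transformation to \emph{identify} the coefficients of an asymptotic inverse factorial expansion, and finally invoke the uniqueness lemma to conclude that the two expansions must coincide.

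First I would apply Theorem \ref{Watson's Transformation Theorem} with $M=1$ and $w=0$, so that the resulting series carries exactly the factorial denominators $(z+1)(z+2)\cdots(z+k)$. To do so I must match the region $D$ of the present theorem to the sector in Watson's theorem: since here $|\arg(z)|\leq\pi-\varepsilon$ with $\varepsilon>0$ arbitrarily small, I choose $\alpha,\delta>0$ with $\alpha+3\delta<\frac{\pi}{2}$ and $\frac{\pi}{2}+\alpha+3\delta\geq\pi-\varepsilon$, forcing $\alpha$ arbitrarily close to $\frac{\pi}{2}$. The growth bounds $|a_{n}|<A\rho^{n}n!$ and $|R_{n}(z)z^{n+1}|<B\sigma^{n}n!$ are precisely Watson's hypotheses (the absence of an $a_{0}$ term is harmless, corresponding to $a_{0}=0$). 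The conclusion is that $f(z)=b_{0}+\sum_{k=1}^{\infty}\frac{b_{k}}{(z+1)(z+2)\cdots(z+k)}$, absolutely convergent for $\re(z)>0$. Because the asymptotic expansion of $f$ has no constant term, $f(z)\to0$ as $z\to\infty$ in $D$, and every factorial term also tends to $0$, so $b_{0}=0$.

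The main obstacle is verifying that the choice $M=1$ is admissible, and this is exactly where the hypothesis $\rho<\frac{3}{\pi}$ enters. Taking $M=1$ requires $M\leq M_{0}$, where $M_{0}$ is the largest positive root of the transcendental equation in Watson's theorem. Letting $\alpha\to\frac{\pi}{2}$ (so that $\cos\alpha\to0$, $\sin\alpha\to1$, $\cot\alpha\to0$), the constraint $1<p<1+e^{-\pi\cot\alpha}$ relaxes to $1<p<2$, and evaluating the defining equation at $M_{0}=1$ reduces it to $2-2\cos(1/\rho)=p^{2}$. A valid $p>1$ then exists precisely when $2-2\cos(1/\rho)>1$, that is $\cos(1/\rho)<\frac{1}{2}$, that is $1/\rho>\frac{\pi}{3}$, that is $\rho<\frac{3}{\pi}$. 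Hence under our hypothesis the largest root satisfies $M_{0}\geq1$ and $M=1$ is legitimate. I would present this limiting analysis of the transcendental equation as the heart of the proof, since everything else is bookkeeping built on top of it.

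Finally I would pin down the coefficients. Applying the finite Weniger transformation (Lemma \ref{finite Weniger transformation}, equation \eqref{finite Weniger transformation formula}) to the truncated power series $\sum_{k=1}^{n}a_{k}/z^{k}$ and absorbing $R_{n}(z)=O(z^{-n-1})$ into the error gives, for every $n$,
\begin{equation*}
f(z)=\sum_{k=1}^{n}\frac{(-1)^{k}\sum_{l=1}^{k}(-1)^{l}S_{k}^{(1)}(l)a_{l}}{(z+1)(z+2)\cdots(z+k)}+O\left(\frac{1}{z^{n+1}}\right)\quad\text{as }z\to\infty.
\end{equation*}
Thus $f$ admits the asymptotic inverse factorial expansion with coefficients $c_{k}:=(-1)^{k}\sum_{l=1}^{k}(-1)^{l}S_{k}^{(1)}(l)a_{l}$. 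Comparing this with the absolutely convergent expansion with coefficients $b_{k}$ obtained in the first step, Lemma \ref{Uniqueness of inverse factorial series expansions} yields $b_{k}=c_{k}$ for all $k\in\mathbb{N}$, which is exactly the claimed identity \eqref{structure of inverse factorial series expansion} for $\re(z)>0$. The only routine checks that remain are that the Weniger error and $R_{n}$ combine into a single $O(z^{-n-1})$ term and that the hypotheses of the uniqueness lemma (one absolutely convergent and one asymptotic inverse factorial expansion) are met, both of which are immediate from the two preceding steps.
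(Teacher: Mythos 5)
Your overall architecture is exactly the paper's own proof: apply Watson's Transformation Theorem (Theorem \ref{Watson's Transformation Theorem}) with $M=1$, $w=0$, $a_{0}=b_{0}=0$ to obtain an absolutely convergent inverse factorial series with unknown coefficients $b_{k}$; apply the finite Weniger transformation \eqref{finite Weniger transformation formula} to the truncated asymptotic expansion, absorbing $R_{n}(z)$ into the error, to obtain an asymptotic inverse factorial expansion with coefficients $c_{k}=(-1)^{k}\sum_{l=1}^{k}(-1)^{l}S_{k}^{(1)}(l)a_{l}$; and identify $b_{k}=c_{k}$ via Lemma \ref{Uniqueness of inverse factorial series expansions}. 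Those steps are all correct and match the paper.

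The genuine gap is in the step you call the heart of the proof, the verification that $M=1\leq M_{0}$. Testing whether $M_{0}=1$ itself solves the limiting equation is not the right criterion for the \emph{largest} positive root being at least $1$, and your chain of equivalences ``$2-2\cos(1/\rho)>1$, that is $\cos(1/\rho)<\frac{1}{2}$, that is $1/\rho>\frac{\pi}{3}$'' ignores the periodicity of the cosine. Concretely, take $\rho=\frac{1}{2\pi}<\frac{3}{\pi}$: then $\cos(1/\rho)=\cos(2\pi)=1$, so $2-2\cos(1/\rho)=0$ and no admissible $p>1$ exists; your criterion would declare $M=1$ inadmissible even though the hypothesis $\rho<\frac{3}{\pi}$ holds, so your argument fails to prove the theorem for such $\rho$. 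The correct analysis is to solve for the largest root rather than plug in $M_{0}=1$: in the limit $\alpha\to\frac{\pi}{2}$ the equation becomes $\cos\left(\frac{1}{\rho M_{0}}\right)=1-\frac{p^{2}}{2}$, and since $M_{0}=\frac{1}{\rho\theta}$ is decreasing in $\theta$, the largest root corresponds to the smallest positive solution $\theta=\arccos\left(1-\frac{p^{2}}{2}\right)$, giving $M_{0}=\frac{1}{\rho\arccos\left(1-p^{2}/2\right)}\longrightarrow\frac{3}{\pi\rho}>1$ as $p\to1^{+}$. Choosing $p$ close enough to $1$ then yields $M_{0}>1$ for \emph{every} $\rho<\frac{3}{\pi}$ (for $\rho=\frac{1}{2\pi}$ one finds $M_{0}\approx6$, not $1$). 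This is precisely the value $M_{0}=\frac{3}{\pi\rho}-\varepsilon$ that the paper simply quotes from page 85 of Watson's paper. The repair is local and does not affect the rest of your proposal.
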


\begin{proof}
Let $f(z)$ and the region $D$ be as described in the above Theorem \ref{Structure of inverse factorial series expansions}. Because of the conditions on the function $f(z)$ and the region $D$, we can choose in Theorem \ref{Watson's Transformation Theorem} the variables $\gamma:=0$, $\alpha:=\frac{\pi}{2}-4\varepsilon$, $\delta:=\varepsilon$ and $p:=1+\varepsilon$ for $\varepsilon>0$ arbitrarily small by \cite[\text{beginning of p.\;85}]{16}. We have then that $M_{0}=\frac{3}{\pi\rho}-\varepsilon$ for some arbitrarily small number $\varepsilon>0$ and because $\rho<\frac{3}{\pi}$, we obtain that $M_{0}>1$. According to Watson's Theorem \ref{Watson's Transformation Theorem} with $M:=1<M_{0}$, $w:=0$ and $a_{0}=b_{0}=0$, we know that we can expand the function $f(z)$ into an absolutely convergent series of the form
\begin{displaymath}
\begin{split}
f(z)&=\sum_{k=1}^{\infty}\frac{b_{k}}{(z+1)(z+2)\cdots(z+k)}
\end{split}
\end{displaymath}
for some constants $b_{k}\in\mathbb{C}$ and all $z\in\mathbb{C}$ with $\re(z)>0$.\\
On the other hand, we have by applying a finite Weniger transformation \eqref{finite Weniger transformation formula} to the asymptotic expansion of $f(z)$ that\begin{displaymath}
\begin{split}
f(z)&=\sum_{k=1}^{n}\frac{(-1)^{k}\sum_{l=1}^{k}(-1)^{l}S_{k}^{(1)}(l)a_{l}}{(z+1)(z+2)\cdots(z+k)}+O\left(\frac{1}{z^{n+1}}\right)\;\;\text{as $z\rightarrow\infty$}
\end{split}
\end{displaymath}
also holds.\\
Comparing the two above expressions for $f(z)$ by using Lemma \ref{Uniqueness of inverse factorial series expansions} with $c_{k}:=(-1)^{k}\sum_{l=1}^{k}(-1)^{l}S_{k}^{(1)}(l)a_{l}$, we conclude that we must have the absolutely convergent series
\begin{displaymath}
\begin{split}
f(z)&=\sum_{k=1}^{\infty}\frac{(-1)^{k}\sum_{l=1}^{k}(-1)^{l}S_{k}^{(1)}(l)a_{l}}{(z+1)(z+2)\cdots(z+k)}
\end{split}
\end{displaymath}
for all $z\in\mathbb{C}$ with $\re(z)>0$.
\end{proof}

\section{The Convergent Inverse Factorial Series Expansions for $\zeta(s,z+1-y)$ and $\psi(z+1-y)$}
\label{sec:The Series Expansions for zeta(s,z+1-y) and psi(z+1-y)}

In this section, we deduce in Theorem \ref{Inverse factorial series expansions for zeta(s,z+1-y) and psi(z+1-y)} the convergent inverse factorial series expansions for the functions $\zeta(s,z+1-y)$ and $\psi(z+1-y)$, where $0\leq y\leq1$.\\
For this, we need the following
\begin{lemma}(Euler-Maclaurin summation formula)\cite[\text{Theorem B.5, pp.\;500-501}]{13}\label{Euler-Maclaurin formula}\\
Suppose that $n\in\mathbb{N}$ is a positive integer and that the function $f(t)$ has continuous derivatives through the $n$-th order on the interval $[a,b]$ where $a$ and $b$ are real numbers with $a<b$.\\
Then we have
\begin{equation}
\label{Euler-Maclaurin summation formula}
\begin{split}
\sum_{a<k\leq b}f(k)&=\int_{a}^{b}f(t)dt+\sum_{k=1}^{n}(-1)^{k}\frac{B_{k}(\{b\})}{k!}f^{(k-1)}(b)-\sum_{k=1}^{n}(-1)^{k}\frac{B_{k}(\{a\})}{k!}f^{(k-1)}(a)\\
&\quad+\frac{(-1)^{n+1}}{n!}\int_{a}^{b}f^{(n)}(t)B_{n}(\{t\})dt.
\end{split}
\end{equation}
\end{lemma}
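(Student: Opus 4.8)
The plan is to prove \eqref{Euler-Maclaurin summation formula} by induction on $n$, starting from the elementary sawtooth identity and raising the order one derivative at a time by integration by parts. Before beginning I would record two auxiliary facts about the Bernoulli polynomials, both consequences of material already collected. First, differentiating the explicit formula \eqref{Bernoulli polynomial} and using the identity ${n\choose k}(n-k)=n{n-1\choose k}$ gives the derivative rule $B_{k}'(x)=kB_{k-1}(x)$. Second, setting $y=0$ in the reflection relation \eqref{Bernoulli relation} yields $B_{k}(1)=(-1)^{k}B_{k}$, so that $B_{k}(1)=B_{k}(0)=B_{k}$ for every $k\geq2$, since $(-1)^{k}B_{k}=B_{k}$ holds for even $k$ trivially and for odd $k\geq3$ because then $B_{k}=0$. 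This second fact is the linchpin of the argument: it says precisely that the periodized polynomial $t\mapsto B_{k}(\{t\})$ is continuous across the integers whenever $k\geq2$.

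The base case $n=1$ I would establish directly. Here $B_{1}(\{t\})=\{t\}-\tfrac{1}{2}$ is piecewise linear with unit slope and a downward jump of $1$ at each integer. On any interval $(m,m+1)$ between consecutive integers, $B_{1}(\{t\})=t-m-\tfrac{1}{2}$ has derivative $1$, and integrating by parts gives
\[
\int_{m}^{m+1}f'(t)B_{1}(\{t\})\,dt=\tfrac{1}{2}\bigl(f(m)+f(m+1)\bigr)-\int_{m}^{m+1}f(t)\,dt.
\]
Summing this over the full integer subintervals of $(a,b)$ and treating the two fractional end pieces near $a$ and $b$ separately, the half-weights $\tfrac12 f(k)$ at each interior integer combine in pairs into $f(k)$, the integrals reassemble into $\int_{a}^{b}f\,dt$, and the only surviving non-integer boundary contributions are $-B_{1}(\{a\})f(a)$ and $+B_{1}(\{b\})f(b)$. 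Collecting these terms reproduces \eqref{Euler-Maclaurin summation formula} with $n=1$.

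For the inductive step I would assume the formula for some $n\geq1$ and rewrite only the remainder $\frac{(-1)^{n+1}}{n!}\int_{a}^{b}f^{(n)}(t)B_{n}(\{t\})\,dt$. Using the derivative rule in the form $\frac{d}{dt}B_{n+1}(\{t\})=(n+1)B_{n}(\{t\})$ (valid at every non-integer $t$) and integrating by parts over $[a,b]$, the continuity of $B_{n+1}(\{t\})$ at the integers for $n+1\geq2$ ensures that no interior jump terms appear, so
\[
\int_{a}^{b}f^{(n)}(t)B_{n}(\{t\})\,dt=\frac{1}{n+1}\left[B_{n+1}(\{b\})f^{(n)}(b)-B_{n+1}(\{a\})f^{(n)}(a)\right]-\frac{1}{n+1}\int_{a}^{b}f^{(n+1)}(t)B_{n+1}(\{t\})\,dt.
\]
Multiplying through by $\frac{(-1)^{n+1}}{n!}$ turns the boundary piece into exactly the $k=n+1$ summands $(-1)^{n+1}\frac{B_{n+1}(\{b\})}{(n+1)!}f^{(n)}(b)$ and $-(-1)^{n+1}\frac{B_{n+1}(\{a\})}{(n+1)!}f^{(n)}(a)$, while the leftover integral becomes $\frac{(-1)^{n+2}}{(n+1)!}\int_{a}^{b}f^{(n+1)}(t)B_{n+1}(\{t\})\,dt$, which is the new remainder. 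This advances the formula from $n$ to $n+1$ and closes the induction.

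The main obstacle is bookkeeping rather than any deep difficulty: in the base case one must correctly account for the integer jump discontinuities of $B_{1}(\{t\})$, since it is exactly these jumps that manufacture the discrete sum on the left-hand side, and throughout the induction one must track the alternating signs together with the growing factorials so that each integration by parts reproduces the precise coefficient $(-1)^{k}B_{k}(\{\cdot\})/k!$. The one genuinely substantive point, which I would isolate at the very start, is the continuity of $B_{k}(\{t\})$ at the integers for $k\geq2$; once this is in hand the integration by parts in the inductive step produces no spurious integer contributions and the telescoping is clean.
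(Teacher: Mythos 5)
Your proof is correct, but there is little in the paper to compare it against step by step: the paper's entire ``proof'' of this lemma is a citation, namely that the result is Theorem B.5 of Montgomery--Vaughan \cite[p.~501]{13}. What you have reconstructed is, in substance, the standard argument of that very reference: induction on $n$, with the base case coming from the sawtooth integration by parts $\int_{m}^{m+1}f'(t)B_{1}(\{t\})\,dt=\tfrac{1}{2}\bigl(f(m)+f(m+1)\bigr)-\int_{m}^{m+1}f(t)\,dt$ on each unit interval, and the inductive step from one further integration by parts powered by $\frac{d}{dt}B_{n+1}(\{t\})=(n+1)B_{n}(\{t\})$ and the continuity of $B_{n+1}(\{t\})$ at the integers for $n+1\geq2$. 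Your sign and factorial bookkeeping checks out: the boundary terms produced in the step are exactly the $k=n+1$ summands $(-1)^{n+1}\frac{B_{n+1}(\{b\})}{(n+1)!}f^{(n)}(b)$ and $-(-1)^{n+1}\frac{B_{n+1}(\{a\})}{(n+1)!}f^{(n)}(a)$ of \eqref{Euler-Maclaurin summation formula}, and the new remainder carries the prefactor $\frac{(-1)^{n+2}}{(n+1)!}$. So relative to the paper, your proposal buys self-containedness at the cost of length; it is not a different method, but the method the paper outsources to the literature.

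Two minor points would tighten it. First, your proof that $B_{k}(1)=B_{k}(0)$ for $k\geq2$ leans on the vanishing of the odd Bernoulli numbers $B_{3},B_{5},\ldots$, a standard fact but one stated nowhere in the paper; it is cleaner, and uses only the paper's definitions, to read the continuity off the generating function: $\frac{te^{t}}{e^{t}-1}-\frac{t}{e^{t}-1}=t$ gives $B_{n}(1)=B_{n}(0)$ for all $n\neq1$ at once, after which \eqref{Bernoulli relation} yields the vanishing of the odd Bernoulli numbers as a corollary rather than requiring it as input. Second, your base case is phrased for non-integer endpoints; when $a$ or $b$ is an integer the corresponding end piece is empty and the leftover half-weight $\tfrac{1}{2}f(a)$ or $\tfrac{1}{2}f(b)$ is precisely absorbed by $B_{1}(\{a\})=-\tfrac{1}{2}$ or $B_{1}(\{b\})=-\tfrac{1}{2}$, so the formula still comes out right --- worth one sentence, since the paper later applies the lemma with an integer endpoint (e.g., $b:=N$ in the proof of Lemma \ref{Asymptotic series expansions for zeta(s,z+h) and psi(z+h) with 0leq hleq1}).
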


\begin{proof}
The proof of this Lemma \ref{Euler-Maclaurin formula} is given in \cite[\text{p.\;501}]{13}.
\end{proof}

From the above Lemma \ref{Euler-Maclaurin formula}, it follows
\begin{lemma}\label{Asymptotic series expansions for zeta(s,z+h) and psi(z+h) with 0leq hleq1}(Asymptotic series expansions for $\zeta(s,z+h)$ and $\psi(z+h)$ with $0\leq h\leq1$)\\
Let $n\in\mathbb{N}_{0}$ and let $s\in\mathbb{C}\setminus\{1\}$ such that $\re(s)>-n$. We have for $z\in\mathbb{C} $ with $|\arg(z)|<\pi$ and $0\leq h\leq1$ the asymptotic series expansions
\begin{equation}\label{Hurwitz zeta expansion}
\begin{split}
\zeta(s,z+h)&=\frac{z^{1-s}}{s-1}+\frac{z^{1-s}}{s-1}\sum_{k=1}^{n}{1-s\choose k}\frac{B_{k}(h)}{z^{k}}+O_{n}(z)
\end{split}
\end{equation}
with
\begin{equation}\label{error term O_n(z)}
\begin{split}
\left|O_{n}(z)\right|&=\left|{1-s\choose n+2}\frac{n+2}{s-1}\int_{0}^{\infty}\frac{B_{n+1}\left(\{x-h\}\right)-(-1)^{n+1}B_{n+1}(h)}{(x+z)^{n+s+1}}dx\right|\\
&\leq\frac{2(n+2)}{|s-1|}\left|{1-s\choose n+2}\right|\frac{\left|B_{n+1}\right|\sec^{n+\re(s)+1}\left(\frac{1}{2}\arg(z)\right)}{(n+\re(s))|z|^{n+\re(s)}}\max\left\{1,e^{\im(s)\arg(z)}\right\}
\end{split}
\end{equation}
and
\begin{equation}\label{digamma expansion}
\begin{split}
\psi(z+h)&=\log(z)-\sum_{k=1}^{n}\frac{(-1)^{k}B_{k}(h)}{kz^{k}}+U_{n}(z)
\end{split}
\end{equation}
with
\begin{equation}\label{error term U_n(z)}
\begin{split}
\left|U_{n}(z)\right|&=\left|\int_{0}^{\infty}\frac{(-1)^{n+1}B_{n+1}(h)-B_{n+1}(\{x-h\})}{(x+z)^{n+2}}dx\right|\leq\frac{2\left|B_{n+1}\right|\sec^{n+2}\left(\frac{1}{2}\arg(z)\right)}{(n+1)|z|^{n+1}}.
\end{split}
\end{equation}
\end{lemma}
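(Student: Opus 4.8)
The plan is to derive both expansions from a single application of the Euler--Maclaurin summation formula (Lemma~\ref{Euler-Maclaurin formula}), choosing the interval of summation so that the fractional Bernoulli polynomials $B_k(h)$ appear through the reflection identity \eqref{Bernoulli relation}. For the Hurwitz zeta expansion \eqref{Hurwitz zeta expansion} I would first assume $\re(s)>1$, so that $\zeta(s,z+h)=\sum_{k=0}^{\infty}(k+z+h)^{-s}$ converges, and apply Lemma~\ref{Euler-Maclaurin formula} to $f(t):=(t+z+h)^{-s}$ on the interval $[-h,b]$ at order $n+1$, afterwards letting $b\rightarrow\infty$. The point of the left endpoint $a=-h$ is that for $0<h<1$ the integers in $(-h,b]$ are exactly $0,1,\dots,\lfloor b\rfloor$, so the left-hand side is the partial sum of $\zeta(s,z+h)$, while $\{-h\}=1-h$ is precisely what feeds the reflection formula. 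The main integral is elementary: $\int_{-h}^{b}(t+z+h)^{-s}\,dt=\frac{(b+z+h)^{1-s}-z^{1-s}}{1-s}$, whose limit as $b\rightarrow\infty$ is $\frac{z^{1-s}}{s-1}$, the leading term of \eqref{Hurwitz zeta expansion}; one checks that the boundary contributions at $t=b$ and the tail of the remainder vanish in the limit when $\re(s)$ is large.

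Next I would extract the boundary terms at $t=-h$. Writing $f^{(k-1)}(-h)=(-1)^{k-1}(s)_{k-1}z^{1-s-k}$ and using $B_{k}(\{-h\})=B_{k}(1-h)=(-1)^{k}B_{k}(h)$ from \eqref{Bernoulli relation}, the $a=-h$ boundary sum collapses to $z^{1-s}\sum_{k=1}^{n+1}\frac{(-1)^{k}(s)_{k-1}}{k!}B_{k}(h)z^{-k}$. The key algebraic identity is then $\frac{(-1)^{k}(s)_{k-1}}{k!}=\frac{1}{s-1}{1-s\choose k}$, which follows by pairing the factors of the rising factorial $(s)_{k-1}$ with those of the falling product defining ${1-s\choose k}$; this turns the boundary sum into $\frac{z^{1-s}}{s-1}\sum_{k=1}^{n+1}{1-s\choose k}\frac{B_{k}(h)}{z^{k}}$, matching \eqref{Hurwitz zeta expansion} except for one extra term $k=n+1$.

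The remaining work is to fold that extra $k=n+1$ term together with the order-$(n+1)$ Euler--Maclaurin remainder into the single error term $O_{n}(z)$. After the substitution $x=t+h$ the remainder becomes a constant multiple of $\int_{0}^{\infty}\frac{B_{n+1}(\{x-h\})}{(x+z)^{n+s+1}}\,dx$, and using $\int_{0}^{\infty}(x+z)^{-(n+s+1)}\,dx=\frac{z^{-(n+s)}}{n+s}$ --- valid precisely because $\re(s)>-n$ --- I can rewrite the leftover $k=n+1$ term as an integral of the same shape with numerator a multiple of $B_{n+1}(h)$. Combining the two integrals produces the numerator $B_{n+1}(\{x-h\})-(-1)^{n+1}B_{n+1}(h)$ of \eqref{error term O_n(z)}, the subtracted constant being exactly what makes the integrand decay fast enough to converge at $x\rightarrow\infty$; note that \eqref{error term O_n(z)} only asserts an identity for $|O_n(z)|$, so an overall sign coming out of the $(-1)^{k}$ bookkeeping is immaterial. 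Since both sides are analytic in $s$ on $\re(s)>-n$ away from $s=1$, the identity proved for $\re(s)>1$ extends there by analytic continuation. For the stated bound I would use the geometric estimate $|x+z|\geq(x+|z|)\cos\!\big(\tfrac12\arg(z)\big)$, valid for $x\geq0$ and $|\arg(z)|<\pi$ (this is the source of the $\sec$ powers), bound the numerator by $2\max_{0\leq y\leq1}|B_{n+1}(y)|$ through the estimates recorded after \eqref{Bernoulli polynomial}, control the modulus $|(x+z)^{-s}|$ by the factor $\max\{1,e^{\im(s)\arg(z)}\}$, and integrate.

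The digamma expansion \eqref{digamma expansion} is proved by the same scheme applied to $f(t)=(t+z+h)^{-1}$ on $[-h,b]$: here $\sum_{k=0}^{N}(k+z+h)^{-1}=\psi(N+1+z+h)-\psi(z+h)$ by \eqref{digamma identity}, the main integral gives $\log(b+z+h)-\log(z)$, and since $\psi(x)-\log(x)\rightarrow0$ the divergent pieces cancel as $b=N\rightarrow\infty$, leaving $\psi(z+h)=\log(z)-\sum_{k=1}^{n}\frac{(-1)^{k}B_{k}(h)}{kz^{k}}+U_{n}(z)$; the error term \eqref{error term U_n(z)} is then assembled exactly as above, now with no phase factor since $s$ is effectively $1$ and real. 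I expect the main obstacle to be the bookkeeping in the third step: matching the precise shape of $O_{n}(z)$ and $U_{n}(z)$ requires carefully tracking the $(-1)^{k}$ signs of Lemma~\ref{Euler-Maclaurin formula} through the reflection identity \eqref{Bernoulli relation} and through the peeling of the $(n+1)$-st term, and separately handling the endpoint cases $h\in\{0,1\}$, where the summation range $(-h,b]$ and the value $\{-h\}$ must be interpreted with care.
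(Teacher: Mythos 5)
Your proof of the Hurwitz zeta expansion \eqref{Hurwitz zeta expansion} is essentially the paper's own argument: the paper likewise applies Lemma~\ref{Euler-Maclaurin formula} to $f(x)=(z+h+x)^{-s}$ with $a=-h$, $b=N\rightarrow\infty$ for $\re(s)>1$, exploits $\{-h\}=1-h$ together with the reflection identity \eqref{Bernoulli relation}, and then removes the restriction $\re(s)>1$ by analytic continuation in $s$ (handling $h=0$ separately via \eqref{Hurwitz zeta identity}, just as you flag). Your bookkeeping variant --- running Euler--Maclaurin to order $n+1$ and absorbing the extra $k=n+1$ boundary term through $\int_{0}^{\infty}(x+z)^{-(n+s+1)}dx=\frac{z^{-(n+s)}}{n+s}$ --- is equivalent to the paper's step of upgrading the order-$n$ remainder to the $B_{n+1}$-difference form (an integration by parts), and your sign-folding does reproduce the integral inside \eqref{error term O_n(z)} up to an overall sign, which is indeed immaterial. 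Where you genuinely diverge is in two places. First, for the digamma expansion \eqref{digamma expansion} the paper does not run Euler--Maclaurin again: it quotes the expansion \eqref{LogGammaExpansion} of $\ln\Gamma(z+h)$ with explicit remainder from \cite{14} and differentiates it in $z$, whereas you telescope $\sum_{k=0}^{N}\frac{1}{z+h+k}=\psi(N+1+z+h)-\psi(z+h)$ via \eqref{digamma identity} and cancel the logarithmic divergence as $N\rightarrow\infty$. Both are valid; yours is self-contained and treats both halves of the lemma by one uniform scheme, while the paper's route buys the remainder formula and its estimate for free from the cited source. Second, for the error bounds the paper simply cites \cite{14} and \cite{15}, whereas you sketch a direct derivation from $|x+z|\geq(x+|z|)\cos\left(\frac{1}{2}\arg(z)\right)$ and $|w^{-s}|=|w|^{-\re(s)}e^{\im(s)\arg(w)}$; this is correct and is exactly the origin of the secant powers and of the factor $\max\left\{1,e^{\im(s)\arg(z)}\right\}$.

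One caveat on that last point: bounding the numerator by $\left|B_{n+1}(\{x-h\})\right|+\left|B_{n+1}(h)\right|\leq2\max_{0\leq y\leq1}\left|B_{n+1}(y)\right|$ does \emph{not} literally yield the constant $2\left|B_{n+1}\right|=2\left|B_{n+1}(0)\right|$ appearing in \eqref{error term O_n(z)} and \eqref{error term U_n(z)}: the two agree when $n+1$ is even, but for odd $n+1\geq3$ one has $B_{n+1}=0$ while $B_{n+1}(y)\not\equiv0$, so no elementary triangle-inequality argument can produce the stated constant. This discrepancy is harmless for everything the lemma is used for --- the proof of Theorem~\ref{Inverse factorial series expansions for zeta(s,z+1-y) and psi(z+1-y)} only needs a bound of the shape $C\sec^{n+c}\left(\frac{1}{2}\arg(z)\right)n!\,\rho^{n}/|z|^{n+c'}$, which your constant delivers just as well --- but if you insist on the inequalities exactly as printed, you would have to fall back on the references \cite{14,15} as the paper does, rather than on the elementary estimate.
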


\begin{proof}
Let $0<h\leq1$ and let $z\in\mathbb{C}$ with $\left|\arg(z)\right|<\pi$. Setting $a:=-h$, $b:=N$ and $f(x):=\frac{1}{(z+h+x)^{s}}$ with $\frac{d^{n}f(x)}{dx^{n}}=\frac{d^{n}}{dx^{n}}\left(\frac{1}{(z+h+x)^{s}}\right)=-\frac{(n+1)!}{s-1}{1-s\choose n+1}\frac{1}{(z+h+x)^{n+s}}$ into Lemma \ref{Euler-Maclaurin formula}, we obtain for $s\in\mathbb{C}$ with $\re(s)>1$ that
\begin{displaymath}
\begin{split}
\zeta(s,z+h)&=\sum_{k=0}^{\infty}\frac{1}{(z+h+k)^{s}}=\lim_{N\rightarrow\infty}\left(\sum_{-h<k\leq N}\frac{1}{(z+h+k)^{s}}\right)\\
&=\int_{-h}^{\infty}\frac{1}{(z+h+x)^{s}}dx+\lim_{N\rightarrow\infty}\left(\sum_{k=1}^{n}(-1)^{k}\frac{B_{k}(\{N\})}{k!}\frac{d^{k-1}\left(\frac{1}{(z+h+x)^{s}}\right)}{dx^{k-1}}\Bigg|_{x=N}\right)\\
&\quad-\sum_{k=1}^{n}(-1)^{k}\frac{B_{k}(\{-h\})}{k!}\frac{d^{k-1}\left(\frac{1}{(z+h+x)^{s}}\right)}{dx^{k-1}}\Bigg|_{x=-h}+\frac{(-1)^{n+1}}{n!}\int_{-h}^{\infty}\frac{d^{n}\left(\frac{1}{(z+h+x)^{s}}\right)}{dx^{n}}\Bigg|_{x=t}B_{n}(\{t\})dt\\
&=\frac{z^{1-s}}{s-1}+\frac{z^{1-s}}{s-1}\sum_{k=1}^{n}(-1)^{k}{1-s\choose k}\frac{B_{k}(\{1-h\})}{z^{k}}+(-1)^{n}{1-s\choose n+1}\frac{n+1}{s-1}\int_{0}^{\infty}\frac{B_{n}(\{x-h\})}{(x+z)^{n+s}}dx.
\end{split}
\end{displaymath}
This is equivalent to
\begin{displaymath}
\begin{split}
\zeta(s,z+h)&=\frac{z^{1-s}}{s-1}+\frac{z^{1-s}}{s-1}\sum_{k=1}^{n}(-1)^{k}{1-s\choose k}\frac{B_{k}(1-h)}{z^{k}}\\
&\quad+(-1)^{n+1}{1-s\choose n+2}\frac{n+2}{s-1}\int_{0}^{\infty}\frac{B_{n+1}(\{x-h\})-B_{n+1}(1-h)}{(x+z)^{n+s+1}}dx.
\end{split}
\end{displaymath}
We use the relation \eqref{Bernoulli relation} and deduce equation \eqref{Hurwitz zeta expansion}, which extends $\zeta(s,z+h)$ analytically to the whole punctured complex $s$-plane $\mathbb{C}\setminus\{1\}$. Therefore, the equation \eqref{Hurwitz zeta expansion} is also true for all $s\in\mathbb{C}\setminus\{1\}$. By using the identity \eqref{Hurwitz zeta identity}, we see that the formula \eqref{Hurwitz zeta expansion} is also true for $h=0$. The bound \eqref{error term O_n(z)} for the error term $O_{n}(z)$ follows from \cite[\text{p.\;294}]{14} and \cite[\text{p.\;6}]{15}. This proves the first part about $\zeta(s,z+h)$ of the above Lemma \ref{Asymptotic series expansions for zeta(s,z+h) and psi(z+h) with 0leq hleq1}.\\
Now, we prove the second part for $\psi(z+h)$. We have for $z\in\mathbb{C}$ with $\left|\arg(z)\right|<\pi$, $n\geq2$ and $0\leq h\leq1$ the series expansion \cite[\text{Ex.\;4.4, p.\;295}]{14}
\begin{equation}\label{LogGammaExpansion}
\begin{split}
\ln\left(\Gamma(z+h)\right)&=\left(z+h-\frac{1}{2}\right)\ln(z)-z+\frac{1}{2}\ln(2\pi)+\sum_{k=2}^{n}\frac{(-1)^{k}B_{k}(h)}{k(k-1)z^{k-1}}-\frac{1}{n}\int_{0}^{\infty}\frac{B_{n}\left(\{x-h\}\right)}{(x+z)^{n}}dx\\
&\hspace{-1.8cm}=\left(z+h-\frac{1}{2}\right)\ln(z)-z+\frac{1}{2}\ln(2\pi)+\sum_{k=2}^{n}\frac{(-1)^{k}B_{k}(h)}{k(k-1)z^{k-1}}+\frac{1}{n+1}\int_{0}^{\infty}\tfrac{(-1)^{n+1}B_{n+1}(h)-B_{n+1}\left(\{x-h\}\right)}{(x+z)^{n+1}}dx.
\end{split}
\end{equation}
Differentiating this identity with respect to the variable $z$, we get equation \eqref{digamma expansion}. The estimate \eqref{error term U_n(z)} for the error term $U_{n}(z)$ follows from \cite[\text{p.\;294 and Ex.\;4.2, p.\;295}]{14}.
\end{proof}

We get the following
\begin{theorem}\label{Inverse factorial series expansions for zeta(s,z+1-y) and psi(z+1-y)}(Inverse factorial series expansions for $\zeta(s,z+1-y)$ and $\psi(z+1-y)$)\\
Let $0\leq y\leq1$ and $s\in\mathbb{C}\setminus\{1\}$. We have for $z\in\mathbb{H}^{+}$ and all $a\in\mathbb{N}_{0}$ the absolutely convergent inverse factorial series expansions
\begin{equation}\label{inverse factorial series expansion for the Hurwitz zeta function}
\begin{split}
\zeta(s,z+1-y)&=\frac{z^{1-s}}{s-1}+\frac{z^{1-s}}{s-1}\sum_{k=1}^{a}(-1)^{k}{1-s\choose k}\frac{B_{k}(y)}{z^{k}}\\
&\quad+\frac{z^{1-s-a}}{s-1}\sum_{k=1}^{\infty}(-1)^{k+a}\frac{\sum_{l=1}^{k}{1-s\choose l+a}S_{k}^{(1)}(l)B_{l+a}(y)}{(z+1)(z+2)\cdots(z+k)}
\end{split}
\end{equation}
and
\begin{equation}\label{inverse factorial series expansion for the digamma function}
\begin{split}
\psi(z+1-y)&=\log(z)-\sum_{k=1}^{a}\frac{B_{k}(y)}{kz^{k}}+\frac{1}{z^{a}}\sum_{k=1}^{\infty}(-1)^{k+1}\frac{\sum_{l=1}^{k}\frac{(-1)^{l}}{l+a}S_{k}^{(1)}(l)B_{l+a}(y)}{(z+1)(z+2)\cdots(z+k)}.
\end{split}
\end{equation}
\end{theorem}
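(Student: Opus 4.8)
The plan is to feed the asymptotic expansions of Lemma~\ref{Asymptotic series expansions for zeta(s,z+h) and psi(z+h) with 0leq hleq1} into the conversion machinery of Theorem~\ref{Structure of inverse factorial series expansions}, after normalizing the shift to the fractional variable $y$. First I would set $h:=1-y$ in \eqref{Hurwitz zeta expansion} and \eqref{digamma expansion} and apply the reflection relation \eqref{Bernoulli relation}, $(-1)^kB_k(1-y)=B_k(y)$, to rewrite all coefficients in terms of $B_k(y)$; this gives
\[
\zeta(s,z+1-y)=\frac{z^{1-s}}{s-1}+\frac{z^{1-s}}{s-1}\sum_{k=1}^{n}(-1)^k{1-s\choose k}\frac{B_k(y)}{z^k}+O_n(z)
\]
and $\psi(z+1-y)=\log(z)-\sum_{k=1}^{n}\frac{B_k(y)}{kz^k}+U_n(z)$, valid for $|\arg(z)|<\pi$ and, for each fixed $s$, for every $n$ with $\re(s)>-n$.

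Next I would peel off the first $a$ terms and rescale the tail so that Theorem~\ref{Structure of inverse factorial series expansions} applies directly. For the zeta case define
\[
F(z):=(s-1)z^{s-1+a}\left(\zeta(s,z+1-y)-\frac{z^{1-s}}{s-1}-\frac{z^{1-s}}{s-1}\sum_{k=1}^{a}(-1)^k{1-s\choose k}\frac{B_k(y)}{z^k}\right),
\]
and for the digamma case define $G(z):=z^{a}\bigl(\psi(z+1-y)-\log(z)+\sum_{k=1}^{a}\frac{B_k(y)}{kz^k}\bigr)$. Reindexing by $l:=k-a$ shows that $F$ has the asymptotic power series $\sum_{l\ge1}\tilde a_lz^{-l}$ with $\tilde a_l=(-1)^{l+a}{1-s\choose l+a}B_{l+a}(y)$, truncating at order $N$ with remainder $(s-1)z^{s-1+a}O_{N+a}(z)$; likewise $G$ has coefficients $\hat a_l=-B_{l+a}(y)/(l+a)$ and remainder $z^{a}U_{N+a}(z)$.

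Then I would check the three hypotheses of Theorem~\ref{Structure of inverse factorial series expansions} for both $F$ and $G$. Analyticity in the sector $D=\{|z|>0,\ |\arg(z)|\le\pi-\varepsilon\}$ holds because $\zeta(s,\cdot)$ and $\psi$ are analytic off the non-positive integers, $\log(z)$ and the power $z^{s-1+a}$ are analytic off the negative real axis, and for $0\le y\le1$ the singularities $z\in\{y-1,y-2,\dots\}$ of $z\mapsto\zeta(s,z+1-y)$ all lie on that excluded axis. The coefficient bound $|\tilde a_l|,|\hat a_l|<A\rho^l l!$ is the crux: combining the estimate $|B_{l+a}(y)|\le 2\zeta(l+a)(l+a)!/(2\pi)^{l+a}$ with the polynomial growth ${1-s\choose l+a}=O(l^{\re(s)-2})$ from \eqref{binomial coefficient bound} yields $|\tilde a_l|,|\hat a_l|\lesssim\mathrm{poly}(l)\,l!\,(2\pi)^{-l}$, so any $\rho\in\bigl(\tfrac1{2\pi},\tfrac3{\pi}\bigr)$ absorbs the polynomial factor while staying below the threshold $\tfrac3{\pi}$. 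Finally the remainder bound $|R_N(z)z^{N+1}|<B\sigma^N N!$ follows from \eqref{error term O_n(z)} and \eqref{error term U_n(z)}: the rescaling powers make the net power of $|z|$ exactly $|z|^{-(N+1)}$, and the surviving constant is $\lesssim\mathrm{poly}(N)\,N!\,(2\pi)^{-N}$ times the factors $\sec^{N}(\tfrac12\arg(z))$ and $\max\{1,e^{\im(s)\arg(z)}\}$, both bounded on $D$ (the secant by $1/\sin(\varepsilon/2)$); since Theorem~\ref{Structure of inverse factorial series expansions} places no constraint on $\sigma$, any finite $\sigma$ suffices.

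Applying Theorem~\ref{Structure of inverse factorial series expansions} then converts $F$ and $G$ into absolutely convergent inverse factorial series. To finish I would simplify the inner sign, using $(-1)^l\tilde a_l=(-1)^a{1-s\choose l+a}B_{l+a}(y)$ and $(-1)^l\hat a_l=(-1)^{l+1}B_{l+a}(y)/(l+a)$, and then undo the rescaling by the factors $z^{1-s-a}/(s-1)$ and $z^{-a}$; this reproduces \eqref{inverse factorial series expansion for the Hurwitz zeta function} and \eqref{inverse factorial series expansion for the digamma function} verbatim. I expect the coefficient bound to be the only real obstacle: the whole argument hinges on the Bernoulli polynomials growing like $(l+a)!/(2\pi)^{l+a}$, which gives an effective geometric rate $\tfrac1{2\pi}$ comfortably below the $\tfrac3{\pi}$ convergence threshold of Watson's theorem, the binomial factor contributing only harmless polynomial growth.
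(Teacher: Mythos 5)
Your route is the paper's own: set $h=1-y$ in Lemma \ref{Asymptotic series expansions for zeta(s,z+h) and psi(z+h) with 0leq hleq1}, use the reflection \eqref{Bernoulli relation}, peel off the first $a$ terms, pass to the rescaled functions $F$ and $G$ (the paper's $f_{1}$ and $-f_{2}$), verify the hypotheses of Theorem \ref{Structure of inverse factorial series expansions} via the Bernoulli-polynomial bound together with \eqref{binomial coefficient bound}, and undo the rescaling; your sign bookkeeping and coefficient identification are correct, and your observation that the effective geometric rate $\tfrac{1}{2\pi}$ sits below the threshold $\tfrac{3}{\pi}$ is exactly the point the paper exploits.

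There is, however, a genuine gap in your verification of the remainder hypothesis in the zeta case. Theorem \ref{Structure of inverse factorial series expansions} requires $|R_{N}(z)z^{N+1}|<B\sigma^{N}N!$ for \emph{every} $N\in\mathbb{N}$ with $B,\sigma$ independent of $N$, but the bound \eqref{error term O_n(z)} you invoke is only available (and only meaningful) when $n+\re(s)>0$: it carries $(n+\re(s))\,|z|^{n+\re(s)}$ in a denominator, and Lemma \ref{Asymptotic series expansions for zeta(s,z+h) and psi(z+h) with 0leq hleq1} is stated only under $\re(s)>-n$. You record this constraint at the outset ("for every $n$ with $\re(s)>-n$") and never return to it, so for fixed $s$ with $\re(s)<-a-1$ --- precisely the regime needed for the Faulhaber application to $\sum_{k\leq x}k^{m}$ with $\re(m)$ large --- your argument establishes no bound on $R_{N}$ for the first $\lfloor-\re(s)\rfloor-a$ values of $N$, and Theorem \ref{Structure of inverse factorial series expansions} cannot be applied as stated. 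The paper closes this with an explicit (short) extra step: these exceptional $N$ are finitely many, each $R_{N}(z)z^{N+1}$ is bounded on the sector (write $R_{N}$ as finitely many later series terms plus a remainder of admissible order), and one then enlarges the constant. Relatedly, for $s\in\{0,-1,-2,\ldots\}$ the generic estimates degenerate --- the error bound divides by $n+a+\re(s)$, which vanishes at $n+a=-s$, and $1/\Gamma(s-1)=0$ annihilates the leading term of \eqref{binomial coefficient bound} --- which is why the paper treats these $s$ separately: there the asymptotic series terminates ($O_{1-s}(z)=0$), and one applies the exact Weniger transformation \eqref{Weniger transformation formula} to the resulting finite inverse power series instead of Watson's theorem. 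Your proof needs either this case distinction or an explicit argument that the terminating expansion makes the hypotheses of Theorem \ref{Structure of inverse factorial series expansions} verifiable there.
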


\begin{proof}
Let $s\in\mathbb{C}\setminus\{1,0,-1,-2,-3,\ldots\}$ be a fixed complex number and let $z\in\mathbb{C}\setminus(-\infty,0]$ with $|\arg(z)|\leq\pi-\varepsilon<\pi$ for some arbitrarily small, but fixed $\varepsilon>0$. Setting $h:=1-y$ for $0\leq y\leq1$ into the identities \eqref{Hurwitz zeta expansion} and \eqref{digamma expansion}, we deduce by using the relation \eqref{Bernoulli relation} and by exchanging $n$ with $n+a$ that
\begin{equation}\label{formula1}
\begin{split}
\zeta(s,z+1-y)&=\frac{z^{1-s}}{s-1}+\frac{z^{1-s}}{s-1}\sum_{k=1}^{n+a}(-1)^{k}{1-s\choose k}\frac{B_{k}(y)}{z^{k}}+O_{n+a}(z)
\end{split}
\end{equation}
and that
\begin{equation}\label{formula2}
\begin{split}
\psi(z+1-y)&=\log(z)-\sum_{k=1}^{n+a}\frac{B_{k}(y)}{kz^{k}}+U_{n+a}(z),
\end{split}
\end{equation}
where $O_{n+a}(z)$ and $U_{n+a}(z)$ are as in the previous Lemma \ref{Asymptotic series expansions for zeta(s,z+h) and psi(z+h) with 0leq hleq1} with $h:=1-y$.\\
We can write the equations \eqref{formula1} and \eqref{formula2} in the forms
\begin{equation}\label{formula3}
\begin{split}
\zeta(s,z+1-y)&=\frac{z^{1-s}}{s-1}+\frac{z^{1-s}}{s-1}\sum_{k=1}^{a}(-1)^{k}{1-s\choose k}\frac{B_{k}(y)}{z^{k}}\\
&\quad+\frac{z^{1-s-a}}{s-1}\sum_{k=1}^{n}(-1)^{k+a}{1-s\choose k+a}\frac{B_{k+a}(y)}{z^{k}}+O_{n+a}(z)
\end{split}
\end{equation}
and
\begin{equation}\label{formula4}
\begin{split}
\psi(z+1-y)&=\log(z)-\sum_{k=1}^{a}\frac{B_{k}(y)}{kz^{k}}-\frac{1}{z^{a}}\sum_{k=1}^{n}\frac{B_{k+a}(y)}{(k+a)z^{k}}+U_{n+a}(z).
\end{split}
\end{equation}
In the following calculations, we will use that the function $g(k):=2^{k}$ grows faster than any polynomial $p(k)$ as $k\rightarrow\infty$.\\
From equation \eqref{formula3}, we get \eqref{inverse factorial series expansion for the Hurwitz zeta function} for $s\in\mathbb{C}\setminus\{1,0,-1,-2,-3,\ldots\}$ by applying Theorem \ref{Structure of inverse factorial series expansions} with $R_{n}(z):=(s-1)z^{s+a-1}O_{n+a}(z)$ to the analytic function $f_{1}(z)$ defined by
\begin{displaymath}
\begin{split}
f_{1}(z):&=(s-1)z^{s+a-1}\left[\zeta(s,z+1-y)-\frac{z^{1-s}}{s-1}-\frac{z^{1-s}}{s-1}\sum_{k=1}^{a}(-1)^{k}{1-s\choose k}\frac{B_{k}(y)}{z^{k}}\right]\\
&=\sum_{k=1}^{n}(-1)^{k+a}{1-s\choose k+a}\frac{B_{k+a}(y)}{z^{k}}+(s-1)z^{s+a-1}O_{n+a}(z)
\end{split}
\end{displaymath}
on $z\in\mathbb{C}\setminus(-\infty,0]$ with $|\arg(z)|\leq\pi-\varepsilon<\pi$, because using that $|x^{s}|=x^{\re(s)}$ for $x\in\mathbb{R}_{0}^{+}$, we have by employing the identity \eqref{binomial coefficient bound} that
\begin{displaymath}
\begin{split}
\left|{1-s\choose k+a}B_{k+a}(y)\right|&\leq\frac{2\zeta(k+a)(k+a)!(k+a)^{\re(s)-2}}{(2\pi)^{k+a}\left|\Gamma(s-1)\right|}+O\left(\frac{2\zeta(k+a)(k+a)!(k+a)^{\re(s)-3}}{(2\pi)^{k+a}}\right)\\
&<\frac{C_{1}(a)k!}{\pi^{k}}
\end{split}
\end{displaymath}
and with $A_{1}:=\max\left\{1,e^{\im(s)\arg(z)}\right\}$, $A_{2}:=\max\left\{1,e^{-\im(s)\arg(z)}\right\}$, as well as $\re(s)>-n$ that
\begin{displaymath}
\begin{split}
\left|(s-1)z^{s+a-1}O_{n+a}(z)\right|&\leq2(n+a+2)A_{1}\left|{1-s\choose n+a+2}\right|\frac{\left|B_{n+a+1}\right|e^{-\im(s)\arg(z)}\sec^{n+a+\re(s)+1}\left(\frac{1}{2}\arg(z)\right)}{\left|n+a+\re(s)\right|\cdot|z|^{n+1}}\\
&\hspace{-3.6cm}\leq\frac{4(n+a+2)A_{2}}{\left|n+a+\re(s)\right|}\cdot\frac{\zeta(n+a+1)(n+a+1)!(n+a+2)^{\re(s)-2}\sec^{n+a+\re(s)+1}\left(\frac{1}{2}\arg(z)\right)}{(2\pi)^{n+a+1}\left|\Gamma(s-1)\right||z|^{n+1}}\\
&\hspace{-3.6cm}\quad+O\left(\frac{4(n+a+2)A_{2}}{\left|n+a+\re(s)\right|}\cdot\frac{\zeta(n+a+1)(n+a+1)!(n+a+2)^{\re(s)-3}\sec^{n+a+\re(s)+1}\left(\frac{1}{2}\arg(z)\right)}{(2\pi)^{n+a+1}|z|^{n+1}}\right)\\
&\hspace{-3.6cm}<\frac{C_{2}(a)\sec^{n}\left(\frac{1}{2}\arg(z)\right)n!}{\pi^{n}|z|^{n+1}}
\end{split}
\end{displaymath}
for some positive constants $C_{1}(a)$, $C_{2}(a)$ depending on $a$ and independent of $n$. In the last computation above, we have used the relation $|z^{s}|=|z|^{\re(s)}e^{-\im(s)\arg(z)}$.\\
The above bound for $\left|(s-1)z^{s+a-1}O_{n+a}(z)\right|$ is also true if $\re(s)\leq-n$ by taking $C_{2}(a)$ large enough, because $\re(s)\leq-n$ is only possible for finitely many $n$'s and in each case we have that $\left|(s-1)z^{s+a-1}O_{n+a}(z)\right|\leq\frac{C(n)}{|z|^{n+1}}$ for all $n\in\mathbb{N}$ and some positive constants $C(n)$.\\
To get formula \eqref{inverse factorial series expansion for the Hurwitz zeta function} also for all $s\in\{0,-1,-2,-3,\ldots\}$, we apply the Weniger transformation formula \eqref{Weniger transformation formula} directly to the function $f_{1}(z)$ with $n:=1-s-a$ and $O_{n+a}(z)=O_{1-s}(z)=0$.\\
Similarly from equation \eqref{formula4}, we obtain the formula \eqref{inverse factorial series expansion for the digamma function} by applying Theorem \ref{Structure of inverse factorial series expansions} with $R_{n}(z):=z^{a}U_{n+a}(z)$ to the analytic function $f_{2}(z)$ defined by
\begin{displaymath}
\begin{split}
f_{2}(z):&=z^{a}\left[\log(z)-\psi(z+1-y)-\sum_{k=1}^{a}\frac{B_{k}(y)}{kz^{k}}\right]=\sum_{k=1}^{n}\frac{B_{k+a}(y)}{(k+a)z^{k}}+z^{a}U_{n+a}(z)
\end{split}
\end{displaymath}
on $z\in\mathbb{C}$ with $|\arg(z)|\leq\pi-\varepsilon<\pi$, because we have
\begin{displaymath}
\begin{split}
\left|\frac{B_{k+a}(y)}{k+a}\right|\leq\frac{2\zeta(k+a)(k+a)!}{(2\pi)^{k+a}(k+a)}<\frac{C_{3}(a)k!}{\pi^{k}}
\end{split}
\end{displaymath}
and
\begin{displaymath}
\begin{split}
\left|z^{a}U_{n+a}(z)\right|&\leq\frac{2\left|B_{n+a+1}\right|\sec^{n+a+2}\left(\frac{1}{2}\arg(z)\right)}{(n+a+1)|z|^{n+1}}\leq\frac{4\zeta(n+a+1)\sec^{n+a+2}\left(\frac{1}{2}\arg(z)\right)(n+a+1)!}{(2\pi)^{n+a+1}(n+a+1)|z|^{n+1}}\\
&<\frac{C_{4}(a)\sec^{n}\left(\frac{1}{2}\arg(z)\right)n!}{\pi^{n}|z|^{n+1}}
\end{split}
\end{displaymath}
for some positive constants $C_{3}(a)$, $C_{4}(a)$ depending on $a$ and independent of $n$.
\end{proof}

\newpage

\section{The generalized Faulhaber formulas}
\label{sec:The generalized Faulhaber formulas}

In this section we will prove our generalized versions of Faulhaber's formula, which all converge very rapidly. For their deductions, we will use the above Theorem \ref{Inverse factorial series expansions for zeta(s,z+1-y) and psi(z+1-y)}.

\begin{theorem}\label{extended generalized Faulhaber formulas}(extended generalized Faulhaber formulas)\\
For every complex number $m\in\mathbb{C}\setminus\{-1\}$ and every positive real number $x\in\mathbb{R}^{+}$, we have
\begin{equation}\label{Faulhaber's formula 1}
\begin{split}
\sum_{k=1}^{\lfloor x\rfloor}k^{m}=\frac{1}{m+1}x^{m+1}+\zeta\left(-m\right)+\frac{x^{m+1}}{m+1}\sum_{k=1}^{\infty}(-1)^{k}\frac{\sum_{l=1}^{k}{m+1\choose l}S^{(1)}_{k}(l)B_{l}(\{x\})}{(x+1)(x+2)\cdots(x+k)}.
\end{split}
\end{equation}
More generally, for every $x\in\mathbb{R}^{+}$ and every $a\in\mathbb{N}_{0}$, we have that
\begin{equation}\label{Faulhaber's formula 2}
\begin{split}
\sum_{k=1}^{\lfloor x\rfloor}k^{m}&=\frac{1}{m+1}x^{m+1}+\zeta\left(-m\right)+\frac{1}{m+1}\sum_{k=1}^{a}(-1)^{k}{m+1\choose k}B_{k}(\{x\})x^{m-k+1}\\
&\quad+\frac{x^{m-a+1}}{m+1}\sum_{k=1}^{\infty}(-1)^{k+a}\frac{\sum_{l=1}^{k}{m+1\choose l+a}S^{(1)}_{k}(l)B_{l+a}\left(\left\{x\right\}\right)}{(x+1)(x+2)\cdots(x+k)}
\end{split}
\end{equation}
and for $m=m_{1}+im_{2}\in\mathbb{C}\setminus\{-1\}$ with $m_{1}=\re(m)\geq-1$ the special case
\begin{equation}\label{Faulhaber's formula 3}
\begin{split}
\sum_{k=1}^{\lfloor x\rfloor}k^{m}&=\frac{1}{m+1}x^{m+1}+\zeta\left(-m\right)+\frac{1}{m+1}\sum_{k=1}^{\lfloor m_{1}+1\rfloor}(-1)^{k}{m+1\choose k}B_{k}(\{x\})x^{m-k+1}\\
&\quad+(-1)^{\lfloor m_{1}+1\rfloor}\frac{x^{m-\lfloor m_{1}+1\rfloor+1}}{m+1}\sum_{k=1}^{\infty}(-1)^{k}\frac{\sum_{l=1}^{k}{m+1\choose l+\lfloor m_{1}+1\rfloor}S^{(1)}_{k}(l)B_{l+\lfloor m_{1}+1\rfloor}(\{x\})}{(x+1)(x+2)\cdots(x+k)}.
\end{split}
\end{equation}
Moreover, if $m=-1$, we have for every positive real number $x\in\mathbb{R}^{+}$ and every $a\in\mathbb{N}_{0}$ that
\begin{equation}\label{Faulhaber's formula 4}
\begin{split}
\sum_{k=1}^{\lfloor x\rfloor}\frac{1}{k}&=\log(x)+\gamma-\sum_{k=1}^{a}\frac{B_{k}(\{x\})}{kx^{k}}+\frac{1}{x^{a}}\sum_{k=1}^{\infty}(-1)^{k+1}\frac{\sum_{l=1}^{k}\frac{(-1)^{l}}{l+a}S^{(1)}_{k}(l)B_{l+a}(\{x\})}{(x+1)(x+2)\cdots(x+k)}.
\end{split}
\end{equation}
In particular, we have for $x\in\mathbb{R}^{+}$ that
\begin{equation}\label{Faulhaber's formula 5}
\begin{split}
\sum_{k=1}^{\lfloor x\rfloor}\frac{1}{k}&=\log(x)+\gamma+\sum_{k=1}^{\infty}(-1)^{k+1}\frac{\sum_{l=1}^{k}\frac{(-1)^{l}}{l}S_{k}^{(1)}(l)B_{l}(\{x\})}{(x+1)(x+2)\cdots(x+k)}
\end{split}
\end{equation}
and that
\begin{equation}\label{Faulhaber's formula 6}
\begin{split}
\sum_{k=1}^{\lfloor x\rfloor}\frac{1}{k}&=\log(x)+\gamma-\frac{B_{1}(\{x\})}{x}+\sum_{k=1}^{\infty}(-1)^{k+1}\frac{\sum_{l=1}^{k}\frac{(-1)^{l}}{l+1}S_{k}^{(1)}(l)B_{l+1}(\{x\})}{x(x+1)(x+2)\cdots(x+k)}.
\end{split}
\end{equation}
\end{theorem}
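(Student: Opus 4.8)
The plan is to reduce everything to the two inverse factorial series expansions established in Theorem~\ref{Inverse factorial series expansions for zeta(s,z+1-y) and psi(z+1-y)}, feeding them into the closed-form evaluations of the finite sums $\sum_{k=1}^n k^m$ provided by the Hurwitz zeta summation formula \eqref{Hurwitz summation formula} and the digamma summation formula \eqref{digamma summation formula}. The bridge between the two sides is the elementary observation that, writing $n:=\lfloor x\rfloor$, we always have $n+1=\lfloor x\rfloor+1=x+1-\{x\}$, so that setting $z:=x$, $y:=\{x\}$ puts the shifted argument $n+1$ into exactly the form $z+1-y$ appearing in Theorem~\ref{Inverse factorial series expansions for zeta(s,z+1-y) and psi(z+1-y)}. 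Since $x\in\mathbb{R}^+$ gives $z=x\in\mathbb{H}^+$ and $\{x\}\in[0,1)$ gives $0\le y\le1$, the hypotheses of that theorem are met.

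For the case $m\ne-1$ (formulas \eqref{Faulhaber's formula 1}--\eqref{Faulhaber's formula 3}), I would start from \eqref{Hurwitz summation formula} in the form $\sum_{k=1}^{\lfloor x\rfloor}k^m=\zeta(-m)-\zeta(-m,\lfloor x\rfloor+1)=\zeta(-m)-\zeta(-m,x+1-\{x\})$ and substitute $s:=-m$, $z:=x$, $y:=\{x\}$ into \eqref{inverse factorial series expansion for the Hurwitz zeta function}. The only computation is the bookkeeping of constants: $1-s=m+1$ and $s-1=-(m+1)$, so ${1-s\choose k}={m+1\choose k}$ and the leading term $\frac{z^{1-s}}{s-1}=-\frac{x^{m+1}}{m+1}$; subtracting $\zeta(-m,x+1-\{x\})$ therefore flips every sign of that expansion and recombines with $\zeta(-m)$ to produce exactly \eqref{Faulhaber's formula 2}. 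Formula \eqref{Faulhaber's formula 1} is then the instance $a=0$ (empty middle sum), and \eqref{Faulhaber's formula 3} is the instance $a=\lfloor m_1+1\rfloor$, where the restriction $m_1=\re(m)\ge-1$ is precisely what guarantees $\lfloor m_1+1\rfloor\in\mathbb{N}_0$ so that this is a legitimate choice of $a$; pulling the constant sign $(-1)^{\lfloor m_1+1\rfloor}$ out of $(-1)^{k+a}$ gives the displayed shape.

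For the remaining case $m=-1$ (formulas \eqref{Faulhaber's formula 4}--\eqref{Faulhaber's formula 6}), I would instead use \eqref{digamma summation formula} in the form $\sum_{k=1}^{\lfloor x\rfloor}\frac1k=\psi(\lfloor x\rfloor+1)+\gamma=\psi(x+1-\{x\})+\gamma$ and substitute $z:=x$, $y:=\{x\}$ into \eqref{inverse factorial series expansion for the digamma function}; adding $\gamma$ immediately yields \eqref{Faulhaber's formula 4}. Then \eqref{Faulhaber's formula 5} is the case $a=0$ and \eqref{Faulhaber's formula 6} the case $a=1$, the front factor $\frac1{x^a}$ being absorbed into the inverse-factorial denominator $x(x+1)\cdots(x+k)$.

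The argument is essentially a substitution, so there is no deep obstacle; the delicate points are purely organizational. The main thing to get right is the sign and constant reconciliation in the $m\ne-1$ case (the subtraction of a Hurwitz value turns the $(-1)^{k+a}$ of Theorem~\ref{Inverse factorial series expansions for zeta(s,z+1-y) and psi(z+1-y)} into the correct signs of \eqref{Faulhaber's formula 2}), together with checking that the special values $m\in\{0,-2,-3,\dots\}$ (where $\zeta(-m)$ is a genuine value, not a pole) and the boundary case $0<x<1$ (where $\lfloor x\rfloor=0$ and both sides vanish since $\zeta(s,1)=\zeta(s)$) cause no trouble. Since Theorem~\ref{Inverse factorial series expansions for zeta(s,z+1-y) and psi(z+1-y)} already furnishes the absolute convergence of all the inverse factorial series for every admissible $s$ and every $a\in\mathbb{N}_0$, no new convergence analysis is needed.
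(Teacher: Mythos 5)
Your proposal is correct and follows essentially the same route as the paper's own proof: substituting $s:=-m$, $z:=x$, $y:=\{x\}$ into the Hurwitz zeta expansion \eqref{inverse factorial series expansion for the Hurwitz zeta function} combined with \eqref{Hurwitz summation formula} for $m\neq-1$, and $z:=x$, $y:=\{x\}$ into the digamma expansion \eqref{inverse factorial series expansion for the digamma function} combined with \eqref{digamma summation formula} for $m=-1$, then reading off the special cases as the choices $a=0$, $a=\lfloor m_{1}+1\rfloor$, and $a=1$. Your sign bookkeeping ($\frac{z^{1-s}}{s-1}=-\frac{x^{m+1}}{m+1}$, flipped by the subtraction) and your remarks on the boundary case $0<x<1$ match or slightly amplify what the paper does implicitly.
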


\begin{proof}
From the formula \eqref{inverse factorial series expansion for the Hurwitz zeta function} with the parameters $s:=-m$, $z:=x$ and $y:=\{x\}$, we get
\begin{displaymath}
\begin{split}
\sum_{k=1}^{\left\lfloor x\right\rfloor}k^{m}-\zeta(-m)&=-\zeta(-m,x+1-\{x\})\\
&=\frac{1}{m+1}x^{m+1}+\frac{1}{m+1}\sum_{k=1}^{a}(-1)^{k}{m+1\choose k}B_{k}(\{x\})x^{m-k+1}\\
&\quad+\frac{x^{m-a+1}}{m+1}\sum_{k=1}^{\infty}(-1)^{k+a}\frac{\sum_{l=1}^{k}{m+1\choose l+a}S^{(1)}_{k}(l)B_{l+a}\left(\{x\}\right)}{(x+1)(x+2)\cdots(x+k)}
\end{split}
\end{displaymath}
by using the formula \eqref{Hurwitz summation formula} with $n:=\left\lfloor x\right\rfloor=x-\{x\}$ in the first step. This gives the above identity \eqref{Faulhaber's formula 2} with its special cases \eqref{Faulhaber's formula 1} and \eqref{Faulhaber's formula 3}.\\
Similarly, we now use the formula \eqref{inverse factorial series expansion for the digamma function} again with the variables $z:=x$ and $y:=\{x\}$, and then we get
\begin{displaymath}
\begin{split}
\sum_{k=1}^{\left\lfloor x\right\rfloor}\frac{1}{k}-\gamma&=\psi(x+1-\{x\})\\
&=\log(x)-\sum_{k=1}^{a}\frac{B_{k}(\{x\})}{kx^{k}}+\frac{1}{x^{a}}\sum_{k=1}^{\infty}(-1)^{k+1}\frac{\sum_{l=1}^{k}\frac{(-1)^{l}}{l+a}S^{(1)}_{k}(l)B_{l+a}(\{x\})}{(x+1)(x+2)\cdots(x+k)}
\end{split}
\end{displaymath}
by employing the formula \eqref{digamma summation formula} with $n:=\left\lfloor x\right\rfloor=x-\{x\}$ in the first line of the above calculation. This gives the above identity \eqref{Faulhaber's formula 4} with its special cases \eqref{Faulhaber's formula 5} and \eqref{Faulhaber's formula 6}.
\end{proof}

By setting $x:=n\in\mathbb{N}$ into Theorem \ref{extended generalized Faulhaber formulas}, we obtain the following

\begin{corollary}(generalized Faulhaber formulas)\\
For every complex number $m\in\mathbb{C}\setminus\{-1\}$ and every natural number $n\in\mathbb{N}$, we have
\begin{equation}
\begin{split}
\sum_{k=1}^{n}k^{m}=\frac{1}{m+1}n^{m+1}+\zeta\left(-m\right)+\frac{n^{m+1}}{m+1}\sum_{k=1}^{\infty}\frac{(-1)^{k}\sum_{l=1}^{k}{m+1\choose l}B_{l}S^{(1)}_{k}(l)}{(n+1)(n+2)\cdots(n+k)}
\end{split}
\end{equation}
and more generally when $a\in\mathbb{N}_{0}$ that
\begin{equation}
\begin{split}
\sum_{k=1}^{n}k^{m}&=\frac{1}{m+1}n^{m+1}+\zeta\left(-m\right)+\frac{1}{m+1}\sum_{k=1}^{a}(-1)^{k}{m+1\choose k}B_{k}n^{m-k+1}\\
&\quad+\frac{n^{m-a+1}}{m+1}\sum_{k=1}^{\infty}\frac{(-1)^{k+a}\sum_{l=1}^{k}{m+1\choose l+a}B_{l+a}S^{(1)}_{k}(l)}{(n+1)(n+2)\cdots(n+k)}.
\end{split}
\end{equation}
We have again when $m=m_{1}+im_{2}\in\mathbb{C}\setminus\{-1\}$ with $m_{1}=\re(m)\geq-1$ the special case
\begin{equation}
\begin{split}
\sum_{k=1}^{n}k^{m}&=\frac{1}{m+1}n^{m+1}+\zeta\left(-m\right)+\frac{1}{m+1}\sum_{k=1}^{\lfloor m_{1}+1\rfloor}(-1)^{k}{m+1\choose k}B_{k}n^{m-k+1}\\
&\quad+(-1)^{\lfloor m_{1}+1\rfloor}\frac{n^{m-\lfloor m_{1}+1\rfloor+1}}{m+1}\sum_{k=1}^{\infty}(-1)^{k}\frac{\sum_{l=1}^{k}{m+1\choose l+\lfloor m_{1}+1\rfloor}B_{l+\lfloor m_{1}+1\rfloor}S^{(1)}_{k}(l)}{(n+1)(n+2)\cdots(n+k)}.
\end{split}
\end{equation}
For $m=-1$, we have for every natural number $n\in\mathbb{N}$ and every $a\in\mathbb{N}_{0}$ that
\begin{equation}
\begin{split}
\sum_{k=1}^{n}\frac{1}{k}&=\log(n)+\gamma-\sum_{k=1}^{a}\frac{B_{k}}{kn^{k}}+\frac{1}{n^{a}}\sum_{k=1}^{\infty}\frac{(-1)^{k+1}\sum_{l=1}^{k}\frac{(-1)^{l}}{l+a}B_{l+a}S^{(1)}_{k}(l)}{(n+1)(n+2)\cdots(n+k)}.
\end{split}
\end{equation}
In particular, we have for every $n\in\mathbb{N}$ that
\begin{equation}
\begin{split}
\sum_{k=1}^{n}\frac{1}{k}&=\log(n)+\gamma+\sum_{k=1}^{\infty}\frac{(-1)^{k+1}\sum_{l=1}^{k}\frac{(-1)^{l}}{l}B_{l}S_{k}^{(1)}(l)}{(n+1)(n+2)\cdots(n+k)}\\
&=\log(n)+\gamma+\frac{1}{2(n+1)}+\frac{5}{12(n+1)(n+2)}+\frac{3}{4(n+1)(n+2)(n+3)}\\
&\quad+\frac{251}{120(n+1)(n+2)(n+3)(n+4)}+\ldots
\end{split}
\end{equation}
and that
\begin{equation}\label{GregorioFontanaExpansion}
\begin{split}
\sum_{k=1}^{n}\frac{1}{k}&=\log(n)+\gamma+\frac{1}{2n}+\sum_{k=1}^{\infty}\frac{(-1)^{k}\sum_{l=1}^{k}\frac{B_{l+1}}{l+1}S_{k}^{(1)}(l)}{n(n+1)(n+2)\cdots(n+k)}\\
&=\log(n)+\gamma+\frac{1}{2n}-\frac{1}{12n(n+1)}-\frac{1}{12n(n+1)(n+2)}-\frac{19}{120n(n+1)(n+2)(n+3)}\\
&\quad-\frac{9}{20n(n+1)(n+2)(n+3)(n+4)}-\ldots.
\end{split}
\end{equation}
\end{corollary}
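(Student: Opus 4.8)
The plan is to obtain the Corollary as the direct specialization of Theorem \ref{extended generalized Faulhaber formulas} to the case where the positive real variable $x$ is a natural number $n$. The only arithmetic inputs needed are that for $n\in\mathbb{N}$ one has $\{n\}=0$ and $\lfloor n\rfloor=n$; the first of these collapses every $B_{k}(\{x\})$ to the Bernoulli number $B_{k}=B_{k}(0)$ by \eqref{Bernoulli number}, while the second turns the finite sum $\sum_{k=1}^{\lfloor x\rfloor}k^{m}$ into $\sum_{k=1}^{n}k^{m}$.

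First I would set $x:=n$ in each of the six displayed identities \eqref{Faulhaber's formula 1}--\eqref{Faulhaber's formula 6}. Replacing every occurrence of $\{x\}$ by $0$, every $B_{k}(\{x\})$ and $B_{l+a}(\{x\})$ by $B_{k}$ and $B_{l+a}$ respectively, and noting that $x^{m+1}$, $x^{m-k+1}$, the products $(x+1)\cdots(x+k)$ and $\log(x)$ pass to their counterparts in $n$, yields verbatim the four general identities of the Corollary together with the two particular identities for $m=-1$. Since $n\in\mathbb{N}\subset\mathbb{R}^{+}$ lies in the domain on which Theorem \ref{extended generalized Faulhaber formulas} already asserts absolute convergence, no convergence question has to be revisited; the resulting inverse factorial series converge for every fixed $n\in\mathbb{N}$.

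It then remains only to verify the explicit expanded forms appended to the last two identities. Here I would evaluate the inner coefficient for the first few values of $k$ using the signed Stirling numbers of the first kind ($S_{1}^{(1)}(1)=1$, $S_{2}^{(1)}(1)=-1$, $S_{2}^{(1)}(2)=1$, $S_{3}^{(1)}(1)=2$, $S_{3}^{(1)}(2)=-3$, $S_{3}^{(1)}(3)=1$, and so on from their defining recurrence) together with the standard values $B_{1}=-\tfrac12$, $B_{2}=\tfrac16$, $B_{4}=-\tfrac1{30}$ and the vanishing of $B_{3},B_{5},\dots$. A short computation then produces the displayed rational coefficients $\tfrac12,\tfrac5{12},\tfrac34,\tfrac{251}{120}$ and $-\tfrac1{12},-\tfrac1{12},-\tfrac{19}{120},-\tfrac9{20}$. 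In passing one records the purely cosmetic simplification that lets \eqref{Faulhaber's formula 6} be rewritten as \eqref{GregorioFontanaExpansion}: the sign factor $(-1)^{k+1}(-1)^{l}$ inside the sum may be replaced by the single factor $(-1)^{k}$, because the two versions differ only in the even-$l$ terms, and for even $l\geq2$ the index $l+1$ is odd and at least $3$, so $B_{l+1}=0$ annihilates precisely those terms. The same vanishing of the odd-index Bernoulli numbers is what permits pulling the leading term $-B_{1}/n=\tfrac1{2n}$ out in front.

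Since the whole argument is a substitution followed by finite arithmetic, there is no genuine analytic obstacle. The only place demanding attention is keeping the two competing sign conventions consistent---the front factor $(-1)^{k+1}$ versus the internal $(-1)^{l}$, and the alternating sign of $S_{k}^{(1)}(l)$---while confirming that the displayed coefficients emerge correctly. I expect this bookkeeping to be the most error-prone, though still entirely routine, step.
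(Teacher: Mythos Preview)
Your proposal is correct and matches the paper's own approach exactly: the Corollary is obtained simply by setting $x:=n\in\mathbb{N}$ in Theorem~\ref{extended generalized Faulhaber formulas}, using $\lfloor n\rfloor=n$ and $\{n\}=0$ so that each $B_{k}(\{x\})$ becomes $B_{k}$. Your additional remarks on the sign bookkeeping and the vanishing of odd Bernoulli numbers when passing from \eqref{Faulhaber's formula 6} to \eqref{GregorioFontanaExpansion} are accurate and go slightly beyond what the paper spells out.
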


\noindent For every positive real number $x\in\mathbb{R}^{+}$ and for every natural number $n\in\mathbb{N}$, we list the following $8$ most used generalized Faulhaber summation formulas:

\newpage

\begin{itemize}
\item[1.)]{{\bf Generalized Faulhaber summation formula for the partial sums of $\zeta(2)$:}\\
For every natural number $n\in\mathbb{N}$, we have that
\begin{equation}\label{StirlingExpansion}
\begin{split}
\sum_{k=1}^{n}\frac{1}{k^{2}}&=\zeta(2)-\frac{1}{n}+\sum_{k=1}^{\infty}\frac{(-1)^{k+1}\sum_{l=1}^{k}(-1)^{l}B_{l}S_{k}^{(1)}(l)}{n(n+1)(n+2)\cdots(n+k)}\\
&=\zeta(2)-\frac{1}{n}+\sum_{k=1}^{\infty}\frac{1}{k+1}\cdot\frac{(k-1)!}{n(n+1)(n+2)\cdots(n+k)}\\
&=\zeta(2)-\frac{1}{n}+\frac{1}{2n(n+1)}+\frac{1}{3n(n+1)(n+2)}+\frac{1}{2n(n+1)(n+2)(n+3)}\\
&\quad+\frac{6}{5n(n+1)(n+2)(n+3)(n+4)}+\ldots.
\end{split}
\end{equation}}

\item[2.)]{{\bf Extended generalized Faulhaber summation formula for the partial sums of $\zeta(3)$:}\\
For every real number $x\in\mathbb{R}^{+}$, we obtain
\begin{equation}
\begin{split}
\sum_{k=1}^{\lfloor x\rfloor}\frac{1}{k^{3}}&=\zeta(3)-\frac{1}{2x^{2}}+\frac{1}{2x}\sum_{k=1}^{\infty}(-1)^{k+1}\frac{\sum_{l=1}^{k}(-1)^{l}(l+1)S_{k}^{(1)}(l)B_{l}(\{x\})}{x(x+1)(x+2)\cdots(x+k)}.
\end{split}
\end{equation}}

\item[3.)]{{\bf Extended generalized Faulhaber summation formula for the sum of the square roots:}\\
For every real number $x\in\mathbb{R}^{+}$, we get
\begin{equation}
\begin{split}
\sum_{k=1}^{\lfloor x\rfloor}\sqrt{k}&=\frac{2}{3}x^{\frac{3}{2}}-\frac{1}{4\pi}\zeta\left(\frac{3}{2}\right)+x\sqrt{x}\sum_{k=1}^{\infty}(-1)^{k}\frac{\sum_{l=1}^{k}\frac{(-1)^{l}(2l-5)!!}{2^{l-1}l!}S_{k}^{(1)}(l)B_{l}(\{x\})}{(x+1)(x+2)\cdots(x+k)}.
\end{split}
\end{equation}}

\item[4.)]{{\bf Generalized Faulhaber summation formula for the partial sums of $\zeta(-3/2)$:}\\
For every natural number $n\in\mathbb{N}$, we have that
\begin{equation}
\begin{split}
\sum_{k=1}^{n}k\sqrt{k}&=\frac{2}{5}n^{\frac{5}{2}}+\frac{1}{2}n^{\frac{3}{2}}+\frac{1}{8}\sqrt{n}-\frac{3}{16\pi^{2}}\zeta\left(\frac{5}{2}\right)+3\sqrt{n}\sum_{k=1}^{\infty}(-1)^{k+1}\frac{\sum_{l=1}^{k}\frac{(2l-3)!!}{2^{l+1}(l+2)!}B_{l+2}S_{k}^{(1)}(l)}{(n+1)(n+2)\cdots(n+k)}\\
&=\frac{2}{5}n^{\frac{5}{2}}+\frac{1}{2}n^{\frac{3}{2}}+\frac{1}{8}\sqrt{n}-\frac{3}{16\pi^{2}}\zeta\left(\frac{5}{2}\right)+\frac{\sqrt{n}}{1920(n+1)(n+2)}+\frac{\sqrt{n}}{640(n+1)(n+2)(n+3)}\\
&\quad+\frac{611\sqrt{n}}{107520(n+1)(n+2)(n+3)(n+4)}+\frac{275\sqrt{n}}{10752(n+1)(n+2)(n+3)(n+4)(n+5)}\\
&\quad+\frac{159157\sqrt{n}}{1146880(n+1)(n+2)(n+3)(n+4)(n+5)(n+6)}+\ldots.
\end{split}
\end{equation}}

\item[5.)]{{\bf Generalized Faulhaber summation formula for the partial sums of $\zeta(-5/2)$:}\\
For every natural number $n\in\mathbb{N}$, we obtain that
\begin{equation}
\begin{split}
\sum_{k=1}^{n}k^{2}\sqrt{k}&=\frac{2}{7}n^{\frac{7}{2}}+\frac{1}{2}n^{\frac{5}{2}}+\frac{5}{24}n^{\frac{3}{2}}+\frac{15}{64\pi^{3}}\zeta\left(\frac{7}{2}\right)+15\sqrt{n}\sum_{k=1}^{\infty}(-1)^{k+1}\frac{\sum_{l=1}^{k}\frac{(2l-3)!!}{2^{l+2}(l+3)!}B_{l+3}S_{k}^{(1)}(l)}{(n+1)(n+2)\cdots(n+k)}\\
&=\frac{2}{7}n^{\frac{7}{2}}+\frac{1}{2}n^{\frac{5}{2}}+\frac{5}{24}n^{\frac{3}{2}}+\frac{15}{64\pi^{3}}\zeta\left(\frac{7}{2}\right)-\frac{\sqrt{n}}{384(n+1)}-\frac{\sqrt{n}}{384(n+1)(n+2)}\\
&\quad-\frac{37\sqrt{n}}{7168(n+1)(n+2)(n+3)}-\frac{55\sqrt{n}}{3584(n+1)(n+2)(n+3)(n+4)}\\
&\quad-\frac{1995\sqrt{n}}{32768(n+1)(n+2)(n+3)(n+4)(n+5)}-\ldots.
\end{split}
\end{equation}}

\item[6.)]{{\bf Generalized Faulhaber summation formula for the sum of the inverses of the square roots:}\\
For every natural number $n\in\mathbb{N}$, we get that
\begin{equation}
\begin{split}
\sum_{k=1}^{n}\frac{1}{\sqrt{k}}
&=2\sqrt{n}+\zeta\left(\frac{1}{2}\right)+\frac{1}{2\sqrt{n}}+\frac{1}{\sqrt{n}}\sum_{k=1}^{\infty}(-1)^{k}\frac{\sum_{l=1}^{k}\frac{(2l-1)!!}{2^{l}(l+1)!}B_{l+1}S_{k}^{(1)}(l)}{(n+1)(n+2)\cdots(n+k)}\\
&=2\sqrt{n}+\zeta\left(\frac{1}{2}\right)+\frac{1}{2\sqrt{n}}-\frac{1}{24\sqrt{n}(n+1)}-\frac{1}{24\sqrt{n}(n+1)(n+2)}\\
&\quad-\frac{31}{384\sqrt{n}(n+1)(n+2)(n+3)}-\frac{15}{64\sqrt{n}(n+1)(n+2)(n+3)(n+4)}-\ldots.
\end{split}
\end{equation}}

\item[7.)]{{\bf Extended generalized Faulhaber summation formula for the partial sums of $\zeta(3/2)$:}\\
For every real number $x\in\mathbb{R}^{+}$, we have
\begin{equation}
\begin{split}
\sum_{k=1}^{\lfloor x\rfloor}\frac{1}{k\sqrt{k}}
&=\zeta\left(\frac{3}{2}\right)-\frac{2}{\sqrt{x}}-\frac{B_{1}(\{x\})}{x\sqrt{x}}+\frac{2}{\sqrt{x}}\sum_{k=1}^{\infty}(-1)^{k+1}\frac{\sum_{l=1}^{k}\frac{(-1)^{l}(2l+1)!!}{2^{l+1}(l+1)!}S_{k}^{(1)}(l)B_{l+1}(\{x\})}{x(x+1)(x+2)\cdots(x+k)}.
\end{split}
\end{equation}}

\item[8.)]{{\bf Extended generalized Faulhaber summation formula for the partial sums of $\zeta(5/2)$:}\\
For every real number $x\in\mathbb{R}^{+}$, we obtain
\begin{equation}
\begin{split}
\sum_{k=1}^{\lfloor x\rfloor}\frac{1}{k^{2}\sqrt{k}}&=\zeta\left(\frac{5}{2}\right)-\frac{2}{3x^{\frac{3}{2}}}+\frac{4}{3\sqrt{x}}\sum_{k=1}^{\infty}(-1)^{k+1}\frac{\sum_{l=1}^{k}\frac{(-1)^{l}(2l+1)!!}{2^{l+1}l!}S_{k}^{(1)}(l)B_{l}(\{x\})}{x(x+1)(x+2)\cdots(x+k)}.
\end{split}
\end{equation}}
\end{itemize}

\noindent From Theorem \ref{Structure of inverse factorial series expansions} and the proof of Theorem \ref{extended generalized Faulhaber formulas}, it also follows
\begin{theorem}\label{other generalized Faulhaber formula versions}(other generalized Faulhaber formula versions)\\
For every $x\in\mathbb{R}^{+}$ and every $a\in\mathbb{N}_{0}$, we have that
\begin{equation}
\begin{split}
\sum_{k=1}^{\lfloor x\rfloor}k^{m}&=\frac{1}{m+1}x^{m+1}+\zeta\left(-m\right)+\frac{1}{m+1}\sum_{k=1}^{a}(-1)^{k}{m+1\choose k}B_{k}(\{x\})x^{m-k+1}\\
&\quad+\frac{x^{m+1}}{m+1}\sum_{k=1}^{\infty}(-1)^{k}\frac{\sum_{l=1}^{k}{m+1\choose l+a}S^{(1)}_{k}(l+a)B_{l+a}\left(\left\{x\right\}\right)}{(x+1)(x+2)\cdots(x+k)}
\end{split}
\end{equation}
and for $m=m_{1}+im_{2}\in\mathbb{C}\setminus\{-1\}$ with $m_{1}=\re(m)\geq-1$ the special case
\begin{equation}
\begin{split}
\sum_{k=1}^{\lfloor x\rfloor}k^{m}&=\frac{1}{m+1}x^{m+1}+\zeta\left(-m\right)+\frac{1}{m+1}\sum_{k=1}^{\lfloor m_{1}+1\rfloor}(-1)^{k}{m+1\choose k}B_{k}(\{x\})x^{m-k+1}\\
&\quad+\frac{x^{m+1}}{m+1}\sum_{k=1}^{\infty}(-1)^{k}\frac{\sum_{l=1}^{k}{m+1\choose l+\lfloor m_{1}+1\rfloor}S^{(1)}_{k}\left(l+\lfloor m_{1}+1\rfloor\right)B_{l+\lfloor m_{1}+1\rfloor}(\{x\})}{(x+1)(x+2)\cdots(x+k)}.
\end{split}
\end{equation}
If $m=-1$, we have for every positive real number $x\in\mathbb{R}^{+}$ and every $a\in\mathbb{N}_{0}$ that
\begin{equation}
\begin{split}
\sum_{k=1}^{\lfloor x\rfloor}\frac{1}{k}&=\log(x)+\gamma-\sum_{k=1}^{a}\frac{B_{k}(\{x\})}{kx^{k}}+\sum_{k=1}^{\infty}(-1)^{k+a+1}\frac{\sum_{l=1}^{k}\frac{(-1)^{l}}{l+a}S^{(1)}_{k}(l+a)B_{l+a}(\{x\})}{(x+1)(x+2)\cdots(x+k)}.
\end{split}
\end{equation}
In particular, we have for $x\in\mathbb{R}^{+}$ that
\begin{equation}
\begin{split}
\sum_{k=1}^{\lfloor x\rfloor}\frac{1}{k}&=\log(x)+\gamma-\frac{B_{1}(\{x\})}{x}+\sum_{k=1}^{\infty}(-1)^{k}\frac{\sum_{l=1}^{k}\frac{(-1)^{l}}{l+1}S_{k}^{(1)}(l+1)B_{l+1}(\{x\})}{(x+1)(x+2)\cdots(x+k)}\\
&=\log(x)+\gamma-\frac{\{x\}-\frac{1}{2}}{x}-\frac{\frac{1}{2}\{x\}^{2}-\frac{1}{2}\{x\}+\frac{1}{12}}{(x+1)(x+2)}-\frac{\frac{1}{3}\{x\}^{3}+\{x\}^{2}-\frac{4}{3}\{x\}+\frac{1}{4}}{(x+1)(x+2)(x+3)}\\
&\quad-\frac{\frac{1}{4}\{x\}^{4}+\frac{3}{2}\{x\}^{3}+\frac{11}{4}\{x\}^{2}-\frac{9}{2}\{x\}+\frac{109}{120}}{(x+1)(x+2)(x+3)(x+4)}-\ldots.
\end{split}
\end{equation}
\\
By setting $x:=n\in\mathbb{N}$, we obtain the following:\\
\\
For every $n\in\mathbb{N}$ and every $a\in\mathbb{N}_{0}$, we have that
\begin{equation}
\begin{split}
\sum_{k=1}^{n}k^{m}&=\frac{1}{m+1}n^{m+1}+\zeta\left(-m\right)+\frac{1}{m+1}\sum_{k=1}^{a}(-1)^{k}{m+1\choose k}B_{k}n^{m-k+1}\\
&\quad+\frac{n^{m+1}}{m+1}\sum_{k=1}^{\infty}(-1)^{k}\frac{\sum_{l=1}^{k}{m+1\choose l+a}B_{l+a}S^{(1)}_{k}(l+a)}{(n+1)(n+2)\cdots(n+k)}
\end{split}
\end{equation}
and for $m=m_{1}+im_{2}\in\mathbb{C}\setminus\{-1\}$ with $m_{1}=\re(m)\geq-1$ the special case
\begin{equation}
\begin{split}
\sum_{k=1}^{n}k^{m}&=\frac{1}{m+1}n^{m+1}+\zeta\left(-m\right)+\frac{1}{m+1}\sum_{k=1}^{\lfloor m_{1}+1\rfloor}(-1)^{k}{m+1\choose k}B_{k}n^{m-k+1}\\
&\quad+\frac{n^{m+1}}{m+1}\sum_{k=1}^{\infty}(-1)^{k}\frac{\sum_{l=1}^{k}{m+1\choose l+\lfloor m_{1}+1\rfloor}B_{l+\lfloor m_{1}+1\rfloor}S^{(1)}_{k}\left(l+\lfloor m_{1}+1\rfloor\right)}{(n+1)(n+2)\cdots(n+k)}.
\end{split}
\end{equation}
If $m=-1$, we have for every positive real number $n\in\mathbb{N}$ and every $a\in\mathbb{N}_{0}$ that
\begin{equation}
\begin{split}
\sum_{k=1}^{n}\frac{1}{k}&=\log(n)+\gamma-\sum_{k=1}^{a}\frac{B_{k}}{kn^{k}}+\sum_{k=1}^{\infty}(-1)^{k+a+1}\frac{\sum_{l=1}^{k}\frac{(-1)^{l}}{l+a}B_{l+a}S^{(1)}_{k}(l+a)}{(n+1)(n+2)\cdots(n+k)}.
\end{split}
\end{equation}
In particular, we have for $n\in\mathbb{N}$ that
\begin{equation}
\begin{split}
\sum_{k=1}^{n}\frac{1}{k}&=\log(n)+\gamma+\frac{1}{2n}+\sum_{k=1}^{\infty}(-1)^{k}\frac{\sum_{l=1}^{k}\frac{(-1)^{l}}{l+1}B_{l+1}S_{k}^{(1)}(l+1)}{(n+1)(n+2)\cdots(n+k)}\\
&=\log(n)+\gamma+\frac{1}{2n}-\frac{1}{12(n+1)(n+2)}-\frac{1}{4(n+1)(n+2)(n+3)}\\
&\quad-\frac{109}{120(n+1)(n+2)(n+3)(n+4)}-\ldots.
\end{split}
\end{equation}
\end{theorem}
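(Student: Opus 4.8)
The plan is to re-run the proof of Theorem \ref{extended generalized Faulhaber formulas}, changing only the way the asymptotic series is organized before the series acceleration is applied. In the proof of Theorem \ref{Inverse factorial series expansions for zeta(s,z+1-y) and psi(z+1-y)} the first $a$ terms of the asymptotic expansions \eqref{Hurwitz zeta expansion} and \eqref{digamma expansion} were split off and then a factor $z^{-a}$ was extracted so that the remaining asymptotic series began at order $z^{-1}$; that index shift is exactly what produced the Stirling numbers $S_{k}^{(1)}(l)$ paired with $B_{l+a}$ in Theorem \ref{extended generalized Faulhaber formulas}. Here I would instead apply Theorem \ref{Structure of inverse factorial series expansions} \emph{directly}, without the extra factor $z^{-a}$, to the functions whose first $a$ asymptotic coefficients have simply been set to zero.

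Concretely, for the power-sum case with $s:=-m$, $z:=x$, $y:=\{x\}$, I would consider
$$g(z):=(s-1)z^{s-1}\left[\zeta(s,z+1-y)-\frac{z^{1-s}}{s-1}-\frac{z^{1-s}}{s-1}\sum_{k=1}^{a}(-1)^{k}\binom{1-s}{k}\frac{B_{k}(y)}{z^{k}}\right],$$
which by \eqref{formula1} has the asymptotic expansion $g(z)=\sum_{l=1}^{n+a}\tilde a_{l}/z^{l}+(s-1)z^{s-1}O_{n+a}(z)$ with $\tilde a_{l}=(-1)^{l}\binom{1-s}{l}B_{l}(y)$ for $l>a$ and $\tilde a_{l}=0$ for $1\leq l\leq a$. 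The growth estimates required by Theorem \ref{Structure of inverse factorial series expansions}, in particular $\rho<\tfrac{3}{\pi}$, hold verbatim, because the nonzero coefficients $\tilde a_{l}$ and the remainder are the very quantities already bounded in the proof of Theorem \ref{Inverse factorial series expansions for zeta(s,z+1-y) and psi(z+1-y)} (the bound $\tfrac{C_{1}(a)\,l!}{\pi^{l}}$ and its companion remainder estimate), and vanishing leading coefficients trivially satisfy any such bound. One checks that the remainder $(s-1)z^{s-1}O_{n+a}(z)$ decays like $|z|^{-(n+a+1)}$, so the structure theorem applies with $N:=n+a$ explicit terms.

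Applying Theorem \ref{Structure of inverse factorial series expansions} to $g(z)$ gives $g(z)=\sum_{k=1}^{\infty}\frac{(-1)^{k}\sum_{l=1}^{k}(-1)^{l}S_{k}^{(1)}(l)\tilde a_{l}}{(z+1)(z+2)\cdots(z+k)}$. Since $\tilde a_{l}=0$ for $l\leq a$, the inner sum runs from $l=a+1$; substituting the values of $\tilde a_{l}$, the two factors $(-1)^{l}$ cancel, and I would re-index $l\mapsto l+a$, using $S_{k}^{(1)}(l+a)=0$ for $l+a>k$ to restore the upper limit $k$. This turns the inner sum into $\sum_{l=1}^{k}\binom{1-s}{l+a}S_{k}^{(1)}(l+a)B_{l+a}(y)$, precisely the shifted-argument form in the statement. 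Unwinding the definition of $g$, writing $\binom{1-s}{l+a}=\binom{m+1}{l+a}$, and converting back via \eqref{Hurwitz summation formula} with $n:=\lfloor x\rfloor=x-\{x\}$ yields the first displayed identity; the special case is the choice $a:=\lfloor m_{1}+1\rfloor$.

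For the logarithmic case $m=-1$ I would argue identically, starting from \eqref{digamma expansion}/\eqref{formula2} and applying Theorem \ref{Structure of inverse factorial series expansions} to $\log(z)-\psi(z+1-y)-\sum_{k=1}^{a}\frac{B_{k}(y)}{kz^{k}}$ (again with no factor $z^{-a}$), whose nonzero asymptotic coefficients are $B_{l}(y)/l$. Now the re-indexing $l\mapsto l+a$ is not accompanied by a cancelling $(-1)^{l}$, so it produces an extra sign $(-1)^{a}$; combined with the $(-1)^{k}$ from the structure theorem and the overall minus sign coming from $\log z-\psi=\tilde g+\text{(finite terms)}$, this accounts for the factor $(-1)^{k+a+1}$ in the statement. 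Converting back via \eqref{digamma summation formula} with $n:=\lfloor x\rfloor$ gives the claimed identity, and setting $x:=n\in\mathbb{N}$ (so $\{x\}=0$ and $B_{k}(\{x\})=B_{k}$) specializes every formula to its natural-number version; the explicit numerical expansions follow by taking $a=0$ or $a=1$ and computing the first few coefficients. The only genuinely delicate point is confirming that the hypotheses of Theorem \ref{Structure of inverse factorial series expansions} survive when the leading coefficients vanish and no compensating power of $z$ is extracted — but since those hypotheses constrain only the magnitude of the coefficients and of the remainder, both unchanged from the earlier proof, this is immediate, and the entire argument reduces to careful index bookkeeping.
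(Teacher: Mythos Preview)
Your proposal is correct and follows essentially the approach the paper indicates: the paper's own proof is merely the one-line remark that the theorem follows ``from Theorem \ref{Structure of inverse factorial series expansions} and the proof of Theorem \ref{extended generalized Faulhaber formulas}'', and you have accurately reconstructed what that means --- namely, applying the structure theorem to $(s-1)z^{s-1}[\,\cdots\,]$ rather than $(s-1)z^{s+a-1}[\,\cdots\,]$, so that the first $a$ asymptotic coefficients vanish and the re-indexing $l\mapsto l+a$ lands the shift on the Stirling number argument instead of producing the prefactor $z^{-a}$. Your handling of the sign bookkeeping in the $m=-1$ case and your observation that the growth hypotheses of Theorem \ref{Structure of inverse factorial series expansions} are unaffected by zeroing out finitely many leading coefficients are both correct.
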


\section{Conclusion}
\label{sec:Conclusion}

We have proved a rapidly convergent generalization of Faulhaber's formula to sums of arbitrary complex powers $m\in\mathbb{C}$. In our eyes, these formulas are useful because of their rapid convergence. We believe that they will also have applications in physics \cite{18} such as the extended version of Faulhaber's formula \cite{19,20}. With the universal technique, explained in this paper, one can obtain other summation formulas of this type \cite{21,22}, as for example with Theorem \ref{Structure of inverse factorial series expansions} and equation \eqref{LogGammaExpansion} we obtain:\\
\\
\noindent{\bf Generalized convergent Stirling summation formulas for the sums $\sum_{k=1}^{\left\lfloor x\right\rfloor}\ln(k)$ and $\sum_{k=1}^{n}\ln(k)$:}\\
For every real number $x\in\mathbb{R}^{+}$ and every natural number $a\in\mathbb{N}_{0}$, we have that
\begin{equation}\label{StirlingExpansion1}
\begin{split}
\sum_{k=1}^{\left\lfloor x\right\rfloor}\ln(k)&=x\ln(x)-x+\frac{1}{2}\ln(2\pi)-\ln(x)B_{1}(\{x\})+\sum_{k=1}^{a}\frac{B_{k+1}(\{x\})}{k(k+1)x^{k}}\\
&\quad+\frac{1}{x^{a}}\sum_{k=1}^{\infty}(-1)^{k}\frac{\sum_{l=1}^{k}\frac{(-1)^{l}S_{k}^{(1)}(l)}{(l+a)(l+a+1)}B_{l+a+1}(\{x\})}{(x+1)(x+2)\cdots(x+k)},
\end{split}
\end{equation}
or
\begin{equation}
\begin{split}
\sum_{k=1}^{\left\lfloor x\right\rfloor}\ln(k)&=x\ln(x)-x+\frac{1}{2}\ln(2\pi)-\ln(x)B_{1}(\{x\})+\sum_{k=1}^{a}\frac{B_{k+1}(\{x\})}{k(k+1)x^{k}}\\
&\quad+\sum_{k=1}^{\infty}(-1)^{k+a}\frac{\sum_{l=1}^{k}\frac{(-1)^{l}S_{k}^{(1)}(l+a)}{(l+a)(l+a+1)}B_{l+a+1}(\{x\})}{(x+1)(x+2)\cdots(x+k)},
\end{split}
\end{equation}
or
\begin{equation}
\begin{split}
\sum_{k=1}^{\left\lfloor x\right\rfloor}\ln(k)&=x\ln(x)-x+\frac{1}{2}\ln(2\pi)-\ln(x)B_{1}(\{x\})+\sum_{k=1}^{a}\frac{B_{k+1}(\{x\})}{k(k+1)x^{k}}\\
&\quad+x\sum_{k=1}^{\infty}(-1)^{k+a+1}\frac{\sum_{l=1}^{k}\frac{(-1)^{l}S_{k}^{(1)}(l+a+1)}{(l+a)(l+a+1)}B_{l+a+1}(\{x\})}{(x+1)(x+2)\cdots(x+k)}
\end{split}
\end{equation}
and for every natural number $n\in\mathbb{N}$ and every natural number $a\in\mathbb{N}_{0}$ that
\begin{equation}\label{StirlingExpansion2}
\begin{split}
\sum_{k=1}^{n}\ln(k)&=n\ln(n)-n+\frac{1}{2}\ln(2\pi)+\frac{1}{2}\ln(n)+\sum_{k=1}^{a}\frac{B_{k+1}}{k(k+1)n^{k}}\\
&\quad+\frac{1}{n^{a}}\sum_{k=1}^{\infty}(-1)^{k}\frac{\sum_{l=1}^{k}(-1)^{l}\frac{B_{l+a+1}S_{k}^{(1)}(l)}{(l+a)(l+a+1)}}{(n+1)(n+2)\cdots(n+k)},
\end{split}
\end{equation}
or
\begin{equation}
\begin{split}
\sum_{k=1}^{n}\ln(k)&=n\ln(n)-n+\frac{1}{2}\ln(2\pi)+\frac{1}{2}\ln(n)+\sum_{k=1}^{a}\frac{B_{k+1}}{k(k+1)n^{k}}\\
&\quad+\sum_{k=1}^{\infty}(-1)^{k+a}\frac{\sum_{l=1}^{k}\frac{(-1)^{l}S_{k}^{(1)}(l+a)}{(l+a)(l+a+1)}B_{l+a+1}}{(n+1)(n+2)\cdots(n+k)},
\end{split}
\end{equation}
or
\begin{equation}
\begin{split}
\sum_{k=1}^{n}\ln(k)&=n\ln(n)-n+\frac{1}{2}\ln(2\pi)+\frac{1}{2}\ln(n)+\sum_{k=1}^{a}\frac{B_{k+1}}{k(k+1)n^{k}}\\
&\quad+n\sum_{k=1}^{\infty}(-1)^{k+a+1}\frac{\sum_{l=1}^{k}\frac{(-1)^{l}S_{k}^{(1)}(l+a+1)}{(l+a)(l+a+1)}B_{l+a+1}}{(n+1)(n+2)\cdots(n+k)}.
\end{split}
\end{equation}
For many other functions $f(t)$, we can prove with Theorem \ref{Structure of inverse factorial series expansions} the following summation formulas:\\
\\
{\bf Convergent version of the Euler-Maclaurin summation formula:}\\
For many functions $f(t)$ it holds the following: For every real number $x\in\mathbb{R}^{+}$ and every $a\in\mathbb{N}_{0}$, we have
\begin{equation}
\begin{split}
\sum_{k=1}^{\lfloor x\rfloor}f(k)&=\int_{1}^{x}f(t)dt+C_{f}+\sum_{k=1}^{a}(-1)^{k}\frac{B_{k}(\{x\})}{k!}f^{(k-1)}(x)\\
&\quad+\sum_{k=1}^{\infty}(-1)^{k+a}\frac{\sum_{l=1}^{k}\frac{S^{(1)}_{k}(l)}{(l+a)!}f^{(l+a-1)}(x)B_{l+a}(\{x\})x^{l}}{(x+1)(x+2)\cdots(x+k)},
\end{split}
\end{equation}
or
\begin{equation}
\begin{split}
\sum_{k=1}^{\lfloor x\rfloor}f(k)&=\int_{1}^{x}f(t)dt+C_{f}+\sum_{k=1}^{a}(-1)^{k}\frac{B_{k}(\{x\})}{k!}f^{(k-1)}(x)\\
&\quad+x^{a}\sum_{k=1}^{\infty}(-1)^{k}\frac{\sum_{l=1}^{k}\frac{S^{(1)}_{k}(l+a)}{(l+a)!}f^{(l+a-1)}(x)B_{l+a}(\{x\})x^{l}}{(x+1)(x+2)\cdots(x+k)}
\end{split}
\end{equation}
and for $n\in\mathbb{N}$ and $a\in\mathbb{N}_{0}$ we have
\begin{equation}
\begin{split}
\sum_{k=1}^{n}f(k)&=\int_{1}^{n}f(t)dt+C_{f}+\sum_{k=1}^{a}(-1)^{k}\frac{B_{k}}{k!}f^{(k-1)}(n)\\
&\quad+\sum_{k=1}^{\infty}(-1)^{k+a}\frac{\sum_{l=1}^{k}\frac{S^{(1)}_{k}(l)}{(l+a)!}f^{(l+a-1)}(n)B_{l+a}n^{l}}{(n+1)(n+2)\cdots(n+k)},
\end{split}
\end{equation}
or
\begin{equation}
\begin{split}
\sum_{k=1}^{n}f(k)&=\int_{1}^{n}f(t)dt+C_{f}+\sum_{k=1}^{a}(-1)^{k}\frac{B_{k}}{k!}f^{(k-1)}(n)\\
&\quad+n^{a}\sum_{k=1}^{\infty}(-1)^{k}\frac{\sum_{l=1}^{k}\frac{S^{(1)}_{k}(l+a)}{(l+a)!}f^{(l+a-1)}(n)B_{l+a}n^{l}}{(n+1)(n+2)\cdots(n+k)},
\end{split}
\end{equation}
where the constant $C_{f}$ is given by
\begin{equation}
\begin{split}
C_{f}&=f(1)-\sum_{k=1}^{a}(-1)^{k}\frac{B_{k}}{k!}f^{(k-1)}(1)-\sum_{k=1}^{\infty}(-1)^{k+a}\frac{\sum_{l=1}^{k}\frac{S^{(1)}_{k}(l)}{(l+a)!}f^{(l+a-1)}(1)B_{l+a}}{(k+1)!}\;\;\forall a\in\mathbb{N}_{0}.
\end{split}
\end{equation}

\newpage

\noindent{\bf Convergent version of the Boole summation formula:}\\
For many functions $f(t)$  it holds the following: For every real number $x\in\mathbb{R}^{+}$ and every $a\in\mathbb{N}_{0}$, we have
\begin{equation}
\begin{split}
\sum_{k=1}^{\lfloor x\rfloor}(-1)^{k+1}f(k)&=C_{f}+\frac{(-1)^{x-\{x\}}}{2}\sum_{k=1}^{a}(-1)^{k}\frac{E_{k-1}(\{x\})}{(k-1)!}f^{(k-1)}(x)\\
&\quad+\frac{(-1)^{x-\{x\}}}{2}\sum_{k=1}^{\infty}(-1)^{k+a}\frac{\sum_{l=1}^{k}\frac{S^{(1)}_{k}(l)}{(l+a-1)!}f^{(l+a-1)}(x)E_{l+a-1}(\{x\})x^{l}}{(x+1)(x+2)\cdots(x+k)},
\end{split}
\end{equation}
or
\begin{equation}
\begin{split}
\sum_{k=1}^{\lfloor x\rfloor}(-1)^{k+1}f(k)&=C_{f}+\frac{(-1)^{x-\{x\}}}{2}\sum_{k=1}^{a}(-1)^{k}\frac{E_{k-1}(\{x\})}{(k-1)!}f^{(k-1)}(x)\\
&\quad+\frac{(-1)^{x-\{x\}}}{2}x^{a}\sum_{k=1}^{\infty}(-1)^{k}\frac{\sum_{l=1}^{k}\frac{S^{(1)}_{k}(l+a)}{(l+a-1)!}f^{(l+a-1)}(x)E_{l+a-1}(\{x\})x^{l}}{(x+1)(x+2)\cdots(x+k)},
\end{split}
\end{equation}
where $E_{n}(\{x\})$ denotes the fractional Euler polynomials \cite{22} and for $n\in\mathbb{N}$ and $a\in\mathbb{N}_{0}$ we have
\begin{equation}
\begin{split}
\sum_{k=1}^{n}(-1)^{k+1}f(k)&=C_{f}+(-1)^{n+1}\sum_{k=1}^{a}(-1)^{k}\frac{(2^{k}-1)B_{k}}{k!}f^{(k-1)}(n)\\
&\quad+(-1)^{n+1}\sum_{k=1}^{\infty}(-1)^{k+a}\frac{\sum_{l=1}^{k}\frac{S^{(1)}_{k}(l)}{(l+a)!}(2^{l+a}-1)f^{(l+a-1)}(n)B_{l+a}n^{l}}{(n+1)(n+2)\cdots(n+k)},
\end{split}
\end{equation}
or
\begin{equation}
\begin{split}
\sum_{k=1}^{n}(-1)^{k+1}f(k)&=C_{f}+(-1)^{n+1}\sum_{k=1}^{a}(-1)^{k}\frac{(2^{k}-1)B_{k}}{k!}f^{(k-1)}(n)\\
&\quad+(-1)^{n+1}n^{a}\sum_{k=1}^{\infty}(-1)^{k}\frac{\sum_{l=1}^{k}\frac{S^{(1)}_{k}(l+a)}{(l+a)!}(2^{l+a}-1)f^{(l+a-1)}(n)B_{l+a}n^{l}}{(n+1)(n+2)\cdots(n+k)},
\end{split}
\end{equation}
where the constant $C_{f}$ is given by
\begin{equation}
\begin{split}
C_{f}&=f(1)-\sum_{k=1}^{a}(-1)^{k}\frac{(2^{k}-1)B_{k}}{k!}f^{(k-1)}(1)-\sum_{k=1}^{\infty}(-1)^{k+a}\frac{\sum_{l=1}^{k}\frac{(2^{l+a}-1)S^{(1)}_{k}(l)}{(l+a)!}f^{(l+a-1)}(1)B_{l+a}}{(k+1)!}\;\;\forall a\in\mathbb{N}_{0}.
\end{split}
\end{equation}
A generalization of Faulhaber's formula for alternating sums can be found in \cite{22} and an extended form of it is given for $x\in\mathbb{R}^{+}$ by:\\
{\bf Alternating versions of Faulhaber's formula:}\\
For every $x\in\mathbb{R}^{+}$, it is given by
\begin{equation}
\begin{split}
\sum_{k=1}^{\lfloor x\rfloor}(-1)^{k+1}k^{m}&=\eta(-m)+\frac{(-1)^{x-\{x\}}}{2}\sum_{k=1}^{m+1}(-1)^{k}{m\choose k-1}E_{k-1}(\{x\})x^{m-k-1}\;\;\forall m\in\mathbb{N}_{0}
\end{split}
\end{equation}
and for $n\in\mathbb{N}$ by
\begin{equation}
\begin{split}
\sum_{k=1}^{n}(-1)^{k+1}k^{m}&=\eta(-m)+(-1)^{n+1}\sum_{k=1}^{m+1}(-1)^{k}\frac{2^{k}-1}{k}{m\choose k-1}B_{k}n^{m-k-1}\;\;\forall m\in\mathbb{N}_{0},
\end{split}
\end{equation}
as well as for $x\in\mathbb{R}^{+}$ and $a\in\mathbb{N}_{0}$ by
\begin{equation}
\begin{split}
\sum_{k=1}^{\lfloor x\rfloor}(-1)^{k+1}k^{m}&=\eta(-m)+\frac{(-1)^{x-\{x\}}}{2}\sum_{k=1}^{a}(-1)^{k}{m\choose k-1}E_{k-1}(\{x\})x^{m-k-1}\\
&\quad+\frac{(-1)^{x-\{x\}}x^{m-a-1}}{2}\sum_{k=1}^{\infty}(-1)^{k+a}\frac{\sum_{l=1}^{k}{m\choose l+a-1}S^{(1)}_{k}(l)E_{l+a-1}(\{x\})}{(x+1)(x+2)\cdots(x+k)}\;\;\forall m\in\mathbb{C},
\end{split}
\end{equation}
or
\begin{equation}
\begin{split}
\sum_{k=1}^{\lfloor x\rfloor}(-1)^{k+1}k^{m}&=\eta(-m)+\frac{(-1)^{x-\{x\}}}{2}\sum_{k=1}^{a}(-1)^{k}{m\choose k-1}E_{k-1}(\{x\})x^{m-k-1}\\
&\quad+\frac{(-1)^{x-\{x\}}x^{m-1}}{2}\sum_{k=1}^{\infty}(-1)^{k}\frac{\sum_{l=1}^{k}{m\choose l+a-1}S^{(1)}_{k}(l+a)E_{l+a-1}(\{x\})}{(x+1)(x+2)\cdots(x+k)}\;\;\forall m\in\mathbb{C}
\end{split}
\end{equation}
and for $n\in\mathbb{N}$ and $a\in\mathbb{N}_{0}$ by
\begin{equation}
\begin{split}
\sum_{k=1}^{n}(-1)^{k+1}k^{m}&=\eta(-m)+(-1)^{n+1}\sum_{k=1}^{a}(-1)^{k}\frac{2^{k}-1}{k}{m\choose k-1}B_{k}n^{m-k-1}\\
&\quad+(-1)^{n+1}n^{m-a-1}\sum_{k=1}^{\infty}(-1)^{k+a}\frac{\sum_{l=1}^{k}\frac{2^{l+a}-1}{l+a}{m\choose l+a-1}B_{l+a}S^{(1)}_{k}(l)}{(n+1)(n+2)\cdots(n+k)}\;\;\forall m\in\mathbb{C},
\end{split}
\end{equation}
or
\begin{equation}
\begin{split}
\sum_{k=1}^{n}(-1)^{k+1}k^{m}&=\eta(-m)+(-1)^{n+1}\sum_{k=1}^{a}(-1)^{k}\frac{2^{k}-1}{k}{m\choose k-1}B_{k}n^{m-k-1}\\
&\quad+(-1)^{n+1}n^{m-1}\sum_{k=1}^{\infty}(-1)^{k}\frac{\sum_{l=1}^{k}\frac{2^{l+a}-1}{l+a}{m\choose l+a-1}B_{l+a}S^{(1)}_{k}(l+a)}{(n+1)(n+2)\cdots(n+k)}\;\;\forall m\in\mathbb{C}.
\end{split}
\end{equation}
In the above six equations $\eta(s):=\sum_{k=1}^{\infty}\frac{(-1)^{k+1}}{k^{s}}$ denotes the Dirichlet eta function.

\section{Acknowledgment}
\label{sec:Acknowledgement}

This work was supported by SNSF
(Swiss National Science Foundation) under grant 169247.

\bigskip
\hrule
\bigskip

\noindent 2010 {\it Mathematics Subject Classification}: Primary 65B15; Secondary 11B68.

\noindent\emph{Keywords: }generalization of Faulhaber's formula, extended Faulhaber formula,
finite Weniger transformation, Stirling number of the first kind, Bernoulli polynomial, Bernoulli number, generalized convergent Stirling summation formula, alternating Faulhaber formula.

\end{document}